\newtheorem{theorem}{Theorem}
\newtheorem{corollary}[theorem]{Corollary}
\newtheorem{lemma}[theorem]{Lemma}
\newtheorem{proposition}[theorem]{Proposition}
\newcommand{\omr}[1]{\operatorname{R}_{#1}\!}
\newcommand{\sln}{\operatorname{SL}(n)}
\newcommand{\gln}{\operatorname{GL}(n)}
\newcommand{\R}{\mathbb R}
\newcommand{\B}{{B^n}}
\newcommand{\sn}{{\mathbb S}^{n-1}}
\newcommand{\Beta}{\operatorname{B}}
\renewcommand{\d}{\,\mathrm{d}}
\newcommand{\vol}[1]{\vert #1\vert}
\newcommand{\pl}[1]{#1_{\scriptscriptstyle +}}
\newcommand{\mn}[1]{#1_{\scriptscriptstyle -}}
\renewcommand{\chi}{\operatorname{1}}
\newcommand{\hls}[1]{\operatorname{S}_{#1}\!} 
\title[Affine Hardy--Littlewood--Sobolev inequalities]{Affine Hardy--Littlewood--Sobolev inequalities}
\author{Juli\'an Haddad}
\address{Departamento de An\'alisis Matem\'atico, Facultad de Matem\'aticas, Universidad de Sevilla, Sevilla, Spain}
\email{jhaddad@us.es}
\author{Monika Ludwig}
\address{Institut f\"ur Diskrete Mathematik und Geometrie,
Technische Universit\"at Wien,
Wiedner Hauptstra\ss e 8-10/1046,
1040 Wien, Austria}
\email{monika.ludwig@tuwien.ac.at}
\begin{document}

\begin{abstract}
Sharp affine Hardy--Littlewood--Sobolev inequalities for functions on $\R^n$ are
established, which are significantly stronger than (and directly imply) the sharp Hardy--Littlewood--Sobolev inequalities by Lieb and by Beckner, Dou, and Zhu. In addition, sharp reverse inequalities for the new inequalities and the affine fractional $L^2$ Sobolev inequalities are obtained for log-concave functions on $\R^n$.

\bigskip
{\noindent 2000 AMS subject classification:  26D15 (26B25, 26D10, 46E35, 52A40)}
\end{abstract}

\maketitle

\section{Introduction}
Lieb \cite{Lieb83} established the following sharp Hardy--Littlewood--Sobolev inequality  (HLS inequality): 
\begin{equation}\label{eq_HLS}
\gamma_{n,\alpha}\Vert f\Vert_{\frac{2n}{n+\alpha}}^2\ge 
\int_{\R^n}\int_{\R^n} \frac{f(x) f(y)}{\vert x-y\vert^{n-\alpha}} \d x\d y
\end{equation}
for $0<\alpha<n$ and non-negative $f\in L^p(\R^n)$  with $p = {2n}/({n+\alpha})$.
There is equality if and only if $f(x) = a (1 + \lambda\,|x-x_0|^2)^{-(n+\alpha)/2}$  for $x\in\R^n$ with $a\ge 0$, $\lambda>0$, and $x_0\in\R^n$.
Here $\vert \cdot\vert$ denotes  the Euclidean norm in $\R^n$ and $\Vert f\Vert_{p}^p =\int_{\R^n} \vert f(x)\vert^p\d x$ while $L^p(\R^n)$ is the space of measurable functions $f:\R^n\to\R$ with $\Vert f\Vert_{p}<\infty$. 
The constant is given by
\begin{equation}\label{eq_HLS_const}
    \gamma_{n,\alpha}= \pi^{\frac{n-\alpha}2}\frac{\Gamma(\frac{\alpha}2)}{\Gamma(\frac{n+\alpha}2)}\Big(\frac{\Gamma(n)}{\Gamma(\frac n2)}\Big)^{\frac \alpha n},
\end{equation}
where $\Gamma$ is the gamma function. The HLS inequality \eqref{eq_HLS} can be considered as a weak Young inequality and is equivalent by duality to the sharp fractional $L^2$ Sobolev inequality (for more information, see \cite{Lieb:Loss, CarlenLoss, Carlen2017, FrankLieb2010}).

The HLS inequalities are invariant under translations, rotations, and inversions but not under volume-preserving linear transformations. For geometric questions, affine inequalities, that is, inequalities that are unchanged under translations and volume-preserving linear transformations turned out to be very powerful. 
The best-known example is the Petty projection inequality for convex bodies (that is, compact convex sets with non-empty interior) in $\R^n$, which is stronger than the Euclidean isoperimetric inequality and directly implies it (see, for example, \cite{Schneider:CB2, Gardner}, and see \cite{LYZ2000,Haberl:Schuster1, Zhang99, Tuo_Wang} for related results in the $L^p$ Brunn--Minkowski theory and for more general sets). 
Very recently, Milman and Yehudayoff \cite{MilmanYehudayoff} established isoperimetric inequalities for affine quermassintegrals, which are significantly stronger than the isoperimetric inequality for the classical quermassintegrals (thereby confirming a conjecture by Lutwak \cite{Lutwak88b}).
Gaoyong Zhang's affine Sobolev inequality \cite{Zhang99} is an affine version of the classical $L^1$ Sobolev inequality. 
It was extended to functions of bounded variation by Tuo Wang \cite{Tuo_Wang}, and corresponding results for $L^p$ Sobolev inequalities were established by Lutwak, Yang, and Zhang \cite{LYZ2002b} and Haberl and Schuster \cite{Haberl:Schuster2}. 
Fractional Petty projection inequalities were recently obtained by the authors in \cite{HaddadLudwig_fracsob} and affine fractional $L^p$ Sobolev inequalities  in \cite{HaddadLudwig_fracsob, HaddadLudwig_Lpfracsob}. In all cases, the affine inequalities are significantly stronger than (and imply) their Euclidean counterparts.

\goodbreak
The main aim of this paper is to establish sharp affine HLS inequalities that are stronger than Lieb's sharp HLS inequalities \eqref{eq_HLS}. 

\begin{theorem}\label{thm_aHLS}
For $\,0<\alpha<n$ and non-negative $f\in L^{{2n}/({n+\alpha})}(\R^n)$,
\begin{multline}
\gamma_{n,\alpha} \Vert f\Vert_{\frac{2n}{n+\alpha}}^2
\ge
(n \omega_n)^{1-\frac \alpha n}\Big( 
\int_{\sn}\big(\int_0^\infty t^{\alpha-1} \int_{\R^n} f(x)f(x+t\xi) \d x\d t\big)^{\frac n \alpha}\d \xi\Big)^{\frac \alpha n} \\
\ge
\int_{\R^n}\int_{\R^n} \frac{f(x) f(y)}{\vert x-y\vert^{n-\alpha}} \d x\d y.
\end{multline}
There is equality in the first inequality precisely if $f(x) = a (1 +|\phi (x-x_0)|^2)^{-(n+\alpha)/2}$ for $x\in\R^n$ with $a\ge 0$, $\phi\in\gln$, and $x_0\in\R^n$.
There is equality in the second inequality if $f$ is radially symmetric.
\end{theorem}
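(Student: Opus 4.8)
The two inequalities are proved by very different means. Begin by rewriting the Riesz energy in polar coordinates: substituting $y=x-z$ and then $z=t\xi$ with $t>0$, $\xi\in\sn$, and using that $\int_{\R^n}f(x)f(x-t\xi)\d x=\int_{\R^n}f(x)f(x+t\xi)\d x$ is even in $\xi$, one obtains
\[
\int_{\R^n}\int_{\R^n}\frac{f(x)f(y)}{\vol{x-y}^{n-\alpha}}\d x\,\d y=\int_{\sn}H_f(\xi)\,\d\xi,\qquad
H_f(\xi):=\int_0^\infty t^{\alpha-1}\int_{\R^n}f(x)f(x+t\xi)\,\d x\,\d t .
\]
By the duality of $L^{n/\alpha}(\sn)$ with $L^{n/(n-\alpha)}(\sn)$ (note $n/\alpha>1$), and absorbing the prefactor $n\,\omega_n^{(n-\alpha)/n}$ into the normalization $\vol{\sn}=n\omega_n$ of the dual variable, the middle expression of the Theorem equals
\[
M(f)=\sup\Big\{\int_{\sn}H_f(\xi)\,\rho(\xi)^{n-\alpha}\,\d\xi\ :\ \rho\colon\sn\to[0,\infty)\text{ measurable},\ \int_{\sn}\rho(\xi)^{n}\,\d\xi=n\omega_n\Big\}.
\]
Since $\rho\equiv1$ is admissible, $M(f)\ge\int_{\sn}H_f\,\d\xi$, which is exactly the second inequality; equality holds when $\rho\equiv1$ is optimal, i.e.\ when $H_f$ is constant on $\sn$, in particular when $f$ is radially symmetric.

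For the first inequality, fix an admissible $\rho$, set $W_\rho(z):=\vol{z}^{\alpha-n}\rho(z/\vol{z})^{n-\alpha}$, and undo the polar substitution to get $\int_{\sn}H_f\rho^{n-\alpha}\d\xi=\int_{\R^n}\int_{\R^n}f(x)f(y)\,W_\rho(x-y)\,\d x\,\d y$. The decisive point is that the superlevel set $\{W_\rho>\tau\}$ is the dilate $\tau^{-1/(n-\alpha)}K_\rho$ of the star body $K_\rho$ with radial function $\rho$, hence has volume $\tau^{-n/(n-\alpha)}\vol{K_\rho}=\tau^{-n/(n-\alpha)}\tfrac1n\int_{\sn}\rho^{n}=\omega_n\tau^{-n/(n-\alpha)}$; thus $W_\rho$ and $z\mapsto\vol{z}^{\alpha-n}$ are equidistributed and the symmetric decreasing rearrangement of $W_\rho$ is \emph{exactly} $W_\rho^{\ast}=\vol{\,\cdot\,}^{\alpha-n}$, independently of $\rho$. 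By the Riesz rearrangement inequality, applied to the non-negative $f$ and $W_\rho$,
\[
\int_{\R^n}\int_{\R^n}f(x)f(y)\,W_\rho(x-y)\,\d x\,\d y\le\int_{\R^n}\int_{\R^n}f^{\ast}(x)f^{\ast}(y)\,W_\rho^{\ast}(x-y)\,\d x\,\d y=\int_{\R^n}\int_{\R^n}\frac{f^{\ast}(x)f^{\ast}(y)}{\vol{x-y}^{n-\alpha}}\,\d x\,\d y,
\]
where $f^{\ast}$ is the symmetric decreasing rearrangement of $f$. Taking the supremum over $\rho$, then applying Lieb's inequality \eqref{eq_HLS} to $f^{\ast}$ and using $\|f^{\ast}\|_{2n/(n+\alpha)}=\|f\|_{2n/(n+\alpha)}$, gives $M(f)\le\gamma_{n,\alpha}\|f\|_{2n/(n+\alpha)}^2$, which is the first inequality.

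For the equality case, note that $M$ and $\|\cdot\|_{2n/(n+\alpha)}$ are invariant under $f\mapsto f\circ\psi$ for $\psi\in\sln$ and under translations, while the residual dilation in $\phi\in\gln$ is governed by homogeneity; hence every $f(x)=a(1+\vol{\phi(x-x_0)}^2)^{-(n+\alpha)/2}$ reduces to the radial Lieb extremal $(1+\vol{x}^2)^{-(n+\alpha)/2}$, for which all the inequalities above are equalities ($\rho\equiv1$ is optimal because $f$ is radial; equality holds in \eqref{eq_HLS} because $f$ is a Lieb extremal). Conversely, since $M(f)<\infty$, the supremum defining $M(f)$ is attained at $\rho$ proportional to $H_f^{1/\alpha}$; equality in the first inequality forces $M(f)=\int_{\R^n}\int_{\R^n}f^{\ast}(x)f^{\ast}(y)\vol{x-y}^{\alpha-n}\d x\,\d y=\gamma_{n,\alpha}\|f^{\ast}\|_{2n/(n+\alpha)}^2$, so that $f^{\ast}(x)=a(1+\lambda\vol{x}^2)^{-(n+\alpha)/2}$ by the equality case of \eqref{eq_HLS}, and also equality in the Riesz inequality for this $\rho$; since $W_\rho^{\ast}=\vol{\,\cdot\,}^{\alpha-n}$ is strictly symmetric decreasing, Burchard's characterization of equality in the Riesz rearrangement inequality then forces $f$ to be an affine image of $f^{\ast}$, i.e.\ of the asserted form.

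I expect the main obstacle to lie in the equality analysis of the first inequality: one must identify the maximizing $\rho$ cleanly and then invoke Burchard's rigidity theorem, whose hypotheses need checking precisely because $W_\rho$ is not itself symmetric decreasing even though its rearrangement is the fixed kernel $\vol{\,\cdot\,}^{\alpha-n}$. By contrast, the heart of the positive statement is short once the dual formulation of the middle term is in place — it is the equidistribution identity $W_\rho^{\ast}=\vol{\,\cdot\,}^{\alpha-n}$ for every admissible $\rho$, which collapses the affine energy onto the Euclidean HLS energy of $f^{\ast}$ via a single application of Riesz's inequality.
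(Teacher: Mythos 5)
Your proof is correct and follows essentially the same approach as the paper: you expose the same underlying mechanism (the anisotropic Riesz energy with kernel $\Vert\cdot\Vert_K^{\alpha-n}$, whose Schwarz rearrangement is the Euclidean HLS kernel, followed by Riesz's rearrangement inequality, Lieb's sharp HLS applied to $f^\star$, and Burchard's rigidity for the equality case). The paper packages this via the star-shaped set $\hls\alpha f$ and a dual-mixed-volume monotonicity lemma, while you reformulate the middle term as a supremum over radial densities $\rho$ with the equidistribution identity $W_\rho^\star=\vert\cdot\vert^{\alpha-n}$; this is a cosmetic reorganization of the same argument, with your optimal $\rho\propto H_f^{1/\alpha}$ corresponding exactly to the paper's choice $K=\hls\alpha f$.
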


\noindent
Here, $\sn$ is the unit sphere in $\R^n$ and integration on $\sn$ is with respect to the $(n-1)$-dimensional Hausdorff measure while $\omega_n$ is the $n$-dimensional volume of the $n$-dimensional unit ball.

\goodbreak
To prove Theorem \ref{thm_aHLS}, for given $0<\alpha<n$, we introduce the star-shaped set $\hls \alpha f$ associated to $f$, defined by its radial function for $\xi\in\sn$ as
\begin{equation}\label{eq_defhls}
\rho_{\hls{\alpha}f}(\xi)^{\alpha}= \int_0^\infty t^{\alpha -1} \int_{\R^n} f(x)f(x+t\xi) \d x \d t
\end{equation}
(see Section \ref{sec_hls_body} for details). The first inequality from Theorem \ref{thm_aHLS}  now can be written as
\begin{equation}\label{eq_ahls}
\gamma_{n,\alpha} \Vert f\Vert_{\frac{2n}{n+\alpha}}^2
\ge
n \omega_n^{{1-\frac \alpha n}} \vol{\hls \alpha f}^{\frac \alpha n},
\end{equation}
where $\vol{\cdot}$ denotes $n$-dimensional Lebesgue measure. Since both sides of \eqref{eq_ahls} are invariant under translations of $f$ and 
\[ \hls{\alpha} \,(f\circ\phi^{-1}) = \phi \hls{\alpha} f\]
for volume-preserving linear transformations $\phi: \R^n\to \R^n$,
it follows that inequality \eqref{eq_ahls} is indeed an affine inequality. 

As a critical tool in the proof of Theorem \ref{thm_aHLS}, we introduce an anisotropic version of the right side of \eqref{eq_HLS}. In addition, we use the Riesz rearrangement inequality, its equality case due to Burchard \cite{Burchard96}, and  the original HLS inequalities for radially symmetric functions. 

Recently, Dou and Zhu \cite{DouZhu15} and Beckner \cite{Beckner2015} obtained sharp HLS inequalities also  for $\alpha>n$. In Section \ref{sec_reverse_HLS}, we will establish sharp affine HLS inequalities for $\alpha>n$, which are stronger than (and directly imply) their results.

In Section \ref{sec_GZ}, for $E\subset \R^n$ measurable, we consider $\hls \alpha \chi_E$, where $\chi_E$ is the indicator function of $E$. For a convex body $E\subset \R^n$, we show that $\hls \alpha \chi_E$ is proportional to the radial $\alpha$-mean body of $E$, an important notion that was introduced by Gardner and Zhang \cite{GZ}. Sharp isoperimetric inequalities for radial $\alpha$-mean bodies were recently obtained in \cite{HaddadLudwig_fracsob}. Sharp reverse inequalities were already established by Gardner and Zhang \cite{GZ}. They generalize Zhang's reverse Petty projection inequality \cite{Zhang91}, and equality is attained precisely for $n$-dimensional simplices.  

\goodbreak
In Section \ref{sec_reverse}, we establish reverse inequalities also in the functional setting.

\begin{theorem}\label{thm_reverseineq}
For $\,0<\alpha<n$ and  log-concave $f\in L^2(\R^n)$, 
\[ \frac{\Gamma(n+1)^\frac{\alpha}n}{\Gamma(\alpha)}\,\vol{\hls\alpha f}^\frac{\alpha}n \geq  \|f\|_2^{2-\frac{2\alpha}n} \|f\|_1^{\frac{2\alpha}n} \geq \Vert f\Vert_{\frac{2n}{n+\alpha}}^2.\]
There is equality in the first inequality if $f(x)=a\,e^{-\Vert x - x_0\Vert_\Delta}$ for $x\in\R^n$ with $a\ge 0$, $x_0\in\R^n$, and $\Delta$ is an $n$-dimensional simplex having a vertex at the origin. 
\end{theorem}

\noindent
Here, $\Vert\cdot\Vert_\Delta$ is the gauge function of $\Delta\subset \R^n$. The second inequality follows from H\"older's inequality; for the proof of the first inequality, see Section \ref{sec_reverse}.

\goodbreak
We remark that it is easy to see that for general, non-negative $f\in L^2(\R^n)$, no non-trivial reverse inequality can hold.  In Section \ref{sec_reverse}, we establish reverse inequalities also for $\alpha>n$ (see Theorem \ref{thm_reverseineq2}) and obtain results for $s$-concave functions for $s>0$. Moreover, we will establish reverse affine fractional $L^2$  Sobolev inequalities.

\section{Preliminaries}\label{sec_prelim}

We collect results on symmetrization, star-shaped sets, log-concave and $s$-concave functions, and fractional polar projection bodies. 

\subsection{Symmetrization} 

Let $E \subseteq \R^n$ be a Borel set of finite Lebesgue measure.
The Schwarz symmetral of $E$, denoted by $E^\star$, is the closed, centered Euclidean ball with the same volume as $E$.

Let $f$ be a non-negative measurable function with superlevel sets of finite measure. Let $\{f\ge t\}=\{ x\in \R^n: f(x)\ge t\}$ for $t\in\R$.  We say that $f$ is non-zero if $\{f\ne 0\}$ has positive measure, and we identify functions that are equal up to a set of measure zero.
The layer cake formula states that
\begin{equation}\label{eq_layer_cake}
f(x) = \int_0^\infty \chi_{\{f\geq t\}}(x) \d t
\end{equation}
for almost every $x \in \R^n$ and allows us to recover the function from its superlevel sets. Here, for $E\subset \R^n$, the indicator function $\chi_E$ is defined by $\chi_E(x)=1$ for $x\in E$ and $\chi_E(x)=0$ otherwise.

The Schwarz symmetral of $f$, denoted by $f^\star$, is defined as
\begin{equation}\label{eq_layer_cake_schwarz}
f^\star(x) = \int_0^\infty \chi_{\{f\geq t\}^\star}(x) \d t
\end{equation}
for $x\in\R^n$.
Hence $f^\star$ is determined a.e.\ by the properties of being radially symmetric and having superlevel sets of the same measure as those of $f$. Note that $f^\star$ is also called the symmetric decreasing rearrangement of $f$. We say that $f^\star$ is strictly symmetric decreasing, if $f^\star(x)>f^\star(y)$ whenever $\vert x\vert <\vert y\vert$.

Let $f,g$ be non-negative measurable functions with superlevel sets of finite measure such that $f \leq g$ in $\R^n$. Then, clearly, $\chi_{\{f\geq t\}^\star} \leq \chi_{\{g\geq t\}^\star}$ for all $t>0$ and, consequently, $f^\star \leq g^\star$.

\goodbreak
The proofs of our results make use of the Riesz rearrangement inequality (see, for example, \cite[Theorem 3.7]{Lieb:Loss}). 

\begin{theorem}[Riesz's rearrangement inequality]
        \label{thm_BLL}
        For $f,g,k:\R^n \to \R$ non-negative, measurable functions with superlevel sets of finite measure,
         \[ \int_{\R^n}\int_{\R^n}  f(x) k(x-y) g(y) \d x \d y \leq \int_{\R^n}\int_{\R^n}  f^\star(x) k^\star(x-y) g^\star(y) \d x \d y. \]
\end{theorem}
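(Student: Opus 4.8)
This is the classical Riesz rearrangement inequality; the standard proof, which I would follow, symmetrizes in one direction at a time and reduces everything to a one-dimensional statement.

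\emph{Reduction to sets.} Setting $\Phi(A,B,C):=\int_{\R^n}\int_{\R^n}\chi_A(x)\chi_C(x-y)\chi_B(y)\,\d x\,\d y$ and applying the layer-cake formula \eqref{eq_layer_cake} to $f,g,k$ together with Tonelli's theorem, both sides of the claimed inequality become triple integrals over $(s,r,t)\in(0,\infty)^3$ of $\Phi$ evaluated on the superlevel sets $\{f\ge s\},\{g\ge r\},\{k\ge t\}$, respectively on $\{f^\star\ge s\}=\{f\ge s\}^\star$, and so on. Hence it suffices to prove $\Phi(A,B,C)\le\Phi(A^\star,B^\star,C^\star)$ for Borel sets $A,B,C\subseteq\R^n$ of finite measure. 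Replacing each of $A,B,C$ by its intersection with the centered ball $r\B$ and letting $r\to\infty$, the left side increases to $\Phi(A,B,C)$ by monotone convergence, while the symmetrized right side stays $\le\Phi(A^\star,B^\star,C^\star)$ because $\Phi$ is monotone in each argument and $(A\cap r\B)^\star\subseteq A^\star$; so I may assume $A,B,C\subseteq r\B$ for a fixed $r>0$.

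\emph{Monotonicity under Steiner symmetrization.} For $e\in\sn$ let $S_e$ denote Steiner symmetrization in direction $e$. By Fubini it preserves Lebesgue measure, maps $r\B$ to itself, and is an $L^1$-contraction on indicators, $\vol{S_eA\triangle S_eB}\le\vol{A\triangle B}$ (all follow from comparing the slices, which are centered intervals). Choosing coordinates with $e=e_n$ and writing $x=(x',x_n)$, Fubini in $x_n$ and $y_n$ expresses $\Phi(A,B,C)$ as the integral over $(x',y')\in e^\perp\times e^\perp$ of the \emph{one-dimensional} Riesz functional of the slices $A_{x'},B_{y'},C_{x'-y'}\subseteq\R$. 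The one-dimensional case of the inequality---Riesz's original lemma, which I would prove by approximating the slices from inside by finite unions of intervals and sliding their components together---bounds this inner integrand by the same functional of the centered intervals of equal length, namely $(S_eA)_{x'},(S_eB)_{y'},(S_eC)_{x'-y'}$. Integrating over $(x',y')$ gives $\Phi(A,B,C)\le\Phi(S_eA,S_eB,S_eC)$.

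\emph{Passage to the limit.} It is classical that for a single bounded set $E$ some sequence of iterated Steiner symmetrizations converges in measure to $E^\star$. Since $S_e$ contracts $L^1$-distance and fixes every centered ball, a set lying within $\varepsilon$ of its Schwarz symmetral remains within $\varepsilon$ of it under all further symmetrizations; hence, given $\varepsilon>0$, one symmetrizes first to bring $A$ within $\varepsilon$ of $A^\star$, then $B$ within $\varepsilon$ of $B^\star$, then $C$ within $\varepsilon$ of $C^\star$. As $\Phi(A,B,C)=\int_C\chi_A*\chi_{-B}$ and $\chi_A*\chi_{-B}$ depends continuously in sup-norm (via Cauchy--Schwarz) on $(\chi_A,\chi_B)\in L^2\times L^2$, the functional $\Phi$ is continuous under convergence in measure of sets contained in $r\B$; iterating the monotonicity step and letting $\varepsilon\to0$ yields $\Phi(A,B,C)\le\Phi(A^\star,B^\star,C^\star)$, as required.

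\emph{Main obstacle.} The two genuinely nontrivial ingredients are the one-dimensional base case and the convergence of iterated Steiner symmetrizations; the latter is where care is needed, though the contraction property above makes the simultaneous treatment of the three sets routine once single-set convergence is in hand. An alternative route that bypasses Steiner convergence is to run the same scheme with two-point (polarization) rearrangement instead: a single reflection polarizes all three functions at once, the monotonicity step becomes an elementary four-point inequality, and a suitable sequence of polarizations converges to the symmetric decreasing rearrangement.
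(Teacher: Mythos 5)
The paper does not prove this theorem; it cites it directly as \cite[Theorem 3.7]{Lieb:Loss} and uses it as a black box. Your proposal is therefore not being measured against an in-paper argument, but against the standard one, and it matches it: the reduction to sets via the layer-cake formula, the truncation to a ball, the Steiner-symmetrization monotonicity built on the one-dimensional Riesz lemma, and a compactness-plus-contraction argument for the passage to the Schwarz symmetral is precisely the route in Lieb--Loss. Your outline is correct; a couple of points are worth flagging as the ones a referee would ask you to expand. First, in the truncation step the monotonicity of $\Phi$ in each argument together with $(A\cap r\B)^\star\subseteq A^\star$ does the job, but you should also note that the left side converges by Tonelli/monotone convergence simultaneously in all three sets, not one at a time. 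Second, in the passage to the limit, the $L^1$-contraction of Steiner symmetrization plus the fact that $E^\star$ is a fixed point makes the ``$\varepsilon$ is preserved under further symmetrizations'' claim legitimate, but the assertion that iterated Steiner symmetrizations of a single bounded set converge in measure to its Schwarz symmetral is itself a substantial lemma (Carath\'eodory--Study / Brascamp--Lieb--Luttinger style), and you should state it as such rather than leave it implicit. Your remark on the polarization alternative is apt; it sidesteps the Steiner convergence lemma entirely and is arguably cleaner, though either route is acceptable for the theorem as the paper uses it.
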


\goodbreak
We will use the characterization of equality cases of the Riesz rearrangement inequality due to Burchard  \cite{Burchard96}.

\begin{theorem}[Burchard]
        \label{thm_burchard}
        Let $A,B$ and $C$ be sets of finite positive measure in $\,\R^n$ and denote by $\alpha, \beta$ and $\gamma$ the radii of their Schwarz symmetrals $A^\star, B^\star$ and $C^\star$.
        For $\,\vert\alpha - \beta\vert < \gamma < \alpha+\beta$, there is equality in
        \[ \int_{\R^n}\int_{\R^n} \chi_A(y) \chi_B(x-y) \chi_C(x) \d x \d y \leq  \int_{\R^n}\int_{\R^n} \chi_{A^\star}(y) \chi_{B^\star}(x-y) \chi_{C^\star}(x) \d x \d y \]
        if and only if, up to sets of measure zero,
        \[A=a+\alpha D,\, B = b+\beta D,\, C = c+\gamma D,\]
        where $D$ is a centered ellipsoid, and $a,b$ and $c=a+b$ are vectors in $\R^n$.
\end{theorem}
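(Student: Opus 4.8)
The plan is to obtain the equality characterization along the classical route to the Riesz inequality itself---iterated Steiner symmetrization---and then to extract rigidity from a sharp one-dimensional analysis combined with a classical characterization of ellipsoids. Write $I(A,B,C)=\int_{\R^n}\int_{\R^n}\chi_A(y)\chi_B(x-y)\chi_C(x)\d x\d y$. Recall that $I(A,B,C)\le I(A^\star,B^\star,C^\star)$ is proved by showing that $I$ is non-decreasing under Steiner symmetrization $S_e$ in any direction $e\in\sn$ applied simultaneously to $A,B,C$, and that for sets of finite measure a suitable sequence of such symmetrizations carries $(A,B,C)$ to $(A^\star,B^\star,C^\star)$ in measure, along which $I$ is continuous (the measures being bounded). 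Hence, if $I(A,B,C)=I(A^\star,B^\star,C^\star)$, then, using a prescribed direction $e$ as the first symmetrization of such a sequence, $I(S_eA,S_eB,S_eC)=I(A,B,C)$ for \emph{every} $e\in\sn$.

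The second ingredient is the one-dimensional equality case, where the strict triangle condition enters decisively. For $A,B,C\subset\R$ of measures $2\alpha,2\beta,2\gamma$, write $I_1(A,B,C)=\int_\R(\chi_A*\chi_B)(t)\,\chi_C(t)\d t$. Equality in $I_1(A,B,C)\le I_1(A^\star,B^\star,C^\star)$ forces $A$ and $B$ to be intervals (otherwise $\chi_A*\chi_B$ is strictly dominated by $\chi_{A^\star}*\chi_{B^\star}$ on a set that contributes), whence $\chi_A*\chi_B$ has a trapezoidal profile---constant on a top segment of length $2|\alpha-\beta|$, strictly decreasing on the two flanks---and, since $|\alpha-\beta|<\gamma<\alpha+\beta$, the integral $\int_\R(\chi_A*\chi_B)\chi_C$ over sets $C$ of measure $2\gamma$ is maximized only by the interval centred at $\operatorname{mid}(A)+\operatorname{mid}(B)$. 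Thus, under the strict triangle condition, one-dimensional equality holds precisely when $A,B,C$ are intervals with $\operatorname{mid}(C)=\operatorname{mid}(A)+\operatorname{mid}(B)$.

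Next I would slice. With $x=(x',x_n)$ and $A_{x'}\subset\R$ the vertical slice of $A$ over $x'\in\R^{n-1}$, Fubini's theorem gives
\[I(A,B,C)=\int_{\R^{n-1}}\int_{\R^{n-1}}I_1\big(A_{y'},\,B_{x'-y'},\,C_{x'}\big)\d x'\d y',\]
and $S_{e_n}$ replaces every slice by the centred interval of equal length. So equality under $S_{e_n}$ forces one-dimensional equality of $(A_{y'},B_{x'-y'},C_{x'})$ for a.e.\ $(x',y')$; applying the previous step to those $(x',y')$ for which the three slice lengths again lie in strict triangle position---a set that is non-empty and contains an open set because the slice-length functions are continuous on the relative interiors of the projections with the prescribed averages---shows that along a.e.\ line parallel to $e_n$ the slices of $A,B,C$ are intervals with additive midpoints. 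Running this for every $e\in\sn$ and using that a measurable set almost all of whose one-dimensional sections, in every direction, are intervals agrees up to a null set with a convex body (necessarily bounded, being a convex set of finite measure), we get that $A,B,C$ are, up to null sets, convex bodies. For such a body $K$ and a direction $e$ write $\mu^e_K(x')$ for the midpoint of the chord of $K$ in direction $e$ with foot $x'\in e^\perp$; over the open set of $(x',y')$ above we then have the Pexider equation $\mu^e_C(x')=\mu^e_A(y')+\mu^e_B(x'-y')$, whose solutions on a convex open set are affine with a common linear part. Hence for every $e$ the chord-midpoint loci of $A,B,C$ in direction $e$ lie in parallel hyperplanes obeying the same linear relation, and the classical characterization of ellipsoids through the flatness of all chord-midpoint loci forces $A,B,C$ to be ellipsoids that are homothetic to one another (equal linear parts) and satisfy $\operatorname{mid}(C)=\operatorname{mid}(A)+\operatorname{mid}(B)$ (equal constants); normalizing the common shape to a centred ellipsoid $D$ this is exactly the asserted form $A=a+\alpha D$, $B=b+\beta D$, $C=c+\gamma D$ with $c=a+b$.

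The main obstacle is the third paragraph: controlling the set of lines on which the strict triangle condition for the slices fails---so that ``intervals with additive midpoints'' is available on a set large enough both to force convexity and to run the Pexider argument---and then upgrading the \emph{local} affineness of the midpoint loci (obtained only over the good region) to a genuinely global ellipsoid conclusion. Both are made possible precisely by the hypothesis $|\alpha-\beta|<\gamma<\alpha+\beta$: without it the one-dimensional problem already admits a large non-rigid family of maximizers---any sub-interval of the flat top when $\gamma\le|\alpha-\beta|$, or any interval containing $A+B$ when $\gamma\ge\alpha+\beta$---so no ellipsoid rigidity can hold.
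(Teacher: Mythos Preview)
The paper does not prove Theorem~\ref{thm_burchard}; it is quoted from Burchard's paper \cite{Burchard96} as a tool, with no proof given. There is therefore nothing in the present paper to compare your attempt against.

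That said, your outline follows essentially the strategy of Burchard's original proof: reduce to the one-dimensional equality case via Steiner symmetrization and Fubini, exploit the strict triangle condition to force the slices to be intervals with additive midpoints, deduce convexity, and then invoke the Bertrand--Brunn characterization of ellipsoids by affine midpoint loci. You have also correctly identified the genuine technical difficulty: the strict triangle condition $|\alpha-\beta|<\gamma<\alpha+\beta$ on the full sets does \emph{not} propagate to all slices, and one must show that the ``good'' set of $(x',y')$ where it does hold is large enough (and suitably connected) to run both the convexity argument and the Pexider argument globally. In Burchard's paper this is handled by a careful bootstrap, first working on a region where the slice lengths are close to their averages, and this step occupies most of the work. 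Your sketch is a reasonable roadmap but, as you acknowledge, is not a proof until that obstacle is resolved; the claim that ``a measurable set almost all of whose one-dimensional sections, in every direction, are intervals agrees up to a null set with a convex body'' also requires justification (it is true, but not immediate).
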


\begin{theorem}[Burchard]
    \label{thm_burchard2}
    Let $f,g,k:\R^n \to \R$ non-negative, non-zero, measurable functions with superlevel sets of finite measure such that
    \begin{equation}
    \int_{\R^n}\int_{\R^n}  f(x) k(x-y) g(y) \d x \d y <\infty.
    \end{equation}
    If at least two of the Schwarz symmetrals $f^\star, g^\star, k^\star$ are strictly symmetric decreasing, then
    there is equality in
    \begin{equation}
          \int_{\R^n}\int_{\R^n}  f(x) k(x-y) g(y) \d x \d y \leq \int_{\R^n}\int_{\R^n}  f^\star(x) k^\star(x-y) g^\star(y) \d x \d y
    \end{equation}
    if and only if there is a volume-preserving $\phi\in\gln$ and $a,b,c\in\R^n$ with $c=a+b$ such that 
    \[f(x)=f^\star (\phi^{-1} x-a),\, k(x)=k^\star (\phi^{-1}x-b), g(x)=g^\star (\phi^{-1} x-c)\]
    for $x\in\R^n$.
\end{theorem}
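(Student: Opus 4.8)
The plan is to deduce the statement from its counterpart for indicator functions, Theorem~\ref{thm_burchard}, by cutting $f$, $g$, and $k$ into super-level sets. Using the layer cake formula~\eqref{eq_layer_cake} and Fubini's theorem (legitimate by the finiteness hypothesis), one obtains
\[
\int_{\R^n}\int_{\R^n} f(x)\,k(x-y)\,g(y)\d x\d y=\int_0^\infty\int_0^\infty\int_0^\infty J(t,s,u)\d t\d s\d u,
\]
where $J(t,s,u)=\int_{\R^n}\int_{\R^n}\chi_{\{g\ge u\}}(y)\,\chi_{\{k\ge s\}}(x-y)\,\chi_{\{f\ge t\}}(x)\d x\d y$, and the same identity for the symmetrized integral results from replacing each $\{h\ge r\}$ by its Schwarz symmetral $\{h\ge r\}^\star$ (which agrees with $\{h^\star\ge r\}$ for almost every $r$). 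By Riesz's inequality (Theorem~\ref{thm_BLL}) applied to the indicators, $J(t,s,u)\le J^\star(t,s,u)$ pointwise; hence equality in the functional inequality forces $J(t,s,u)=J^\star(t,s,u)$ for almost every $(t,s,u)\in(0,\infty)^3$. The functional is invariant --- up to reflections, which preserve Schwarz symmetrals --- under permutations of $f$, $g$, $k$, so after relabelling I may assume that $f^\star$ and $g^\star$ are strictly symmetric decreasing. A useful first observation: a strictly symmetric decreasing rearrangement has full support and no constant annuli, so the radii $r_f(t)$, $r_g(u)$ of $\{f\ge t\}^\star$, $\{g\ge u\}^\star$ are continuous non-increasing functions onto $(0,\infty)$, whereas the radius $r_k(s)$ of $\{k\ge s\}^\star$ is merely non-increasing.

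The core of the argument is then to locate the ``good set'' $G\subseteq(0,\infty)^3$ of triples for which the three super-level sets have finite positive measure and $r_f(t)$, $r_g(u)$, $r_k(s)$ form a non-degenerate triangle, so that Theorem~\ref{thm_burchard} applies to $J(t,s,u)\le J^\star(t,s,u)$. Because $r_f$ and $r_g$ are continuous and onto $(0,\infty)$, a short analysis of the triangle inequalities shows that $G$ contains ``boxes'' $r_f^{-1}(I)\times S\times r_g^{-1}(I')$ for suitable intervals $I,I'$ and positive-measure sets $S$; letting these boxes vary, $G$ has positive measure and its projections to the $t$- and $u$-axes are all of $(0,\infty)$. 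On $G$, Theorem~\ref{thm_burchard} gives that $\{f\ge t\}$, $\{k\ge s\}$, $\{g\ge u\}$ are dilates of one common \emph{centered} ellipsoid $D=D(t,s,u)$, positioned so that their barycenters obey the additive relation of that theorem.

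It remains to propagate this rigidity across all levels and reassemble. Since $\{f\ge t\}$ is independent of $s,u$ and $r_f(t)^{-1}\bigl(\{f\ge t\}-(\text{barycenter})\bigr)$ must coincide with the centered ellipsoid $D(t,s,u)$, the barycenter of $\{f\ge t\}$ depends only on $t$, and $D(t,s,u)=:D_f(t)$ depends only on $t$; similarly for $D_g(u)$ and $D_k(s)$. The identity $D_f(t)=D_k(s)=D_g(u)$ on $G$, chained through overlapping boxes, forces one centered ellipsoid $D$; since $\vol D=\vol{\{f\ge t\}}/r_f(t)^n=\omega_n$, one may write $D=\phi\B$ with $\phi\in\gln$ volume-preserving. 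Likewise the three barycenters, each constant in the other two variables and additively linked, are constant; transported by $\phi^{-1}$ and renamed $a$ (for $f$), $b$ (for $k$), $c$ (for $g$), they satisfy $c=a+b$, and $\{f\ge t\}=\phi\bigl(\{f^\star\ge t\}+a\bigr)$ for almost every $t$. Reassembling via~\eqref{eq_layer_cake} yields $f(x)=f^\star(\phi^{-1}x-a)$, and similarly for $k$ and $g$. The converse is immediate: substituting $x\mapsto\phi x$, $y\mapsto\phi y$ and using $\vol{\det\phi}=1$ together with the compatibility of Schwarz symmetrization with translations and volume-preserving maps shows both integrals coincide. I expect the genuine difficulty to be the middle paragraph --- showing that $G$ is large and ``connected'' enough that the level-by-level ellipsoid and barycenters are independent of the level parameters; this is the technical heart of Burchard's proof and is precisely where the hypothesis that two of the symmetrals are strictly symmetric decreasing is needed, since otherwise a single radius can dominate the sum of the other two for all levels and the rigidity fails.
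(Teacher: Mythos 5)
The paper does not prove Theorem~\ref{thm_burchard2} at all; it is quoted, together with Theorem~\ref{thm_burchard}, as a known result of Burchard~\cite{Burchard96}, so there is no in-paper proof to compare yours against. Your sketch is nevertheless a faithful outline of Burchard's own argument: layer-cake $f,g,k$, use Fubini (justified by the finiteness hypothesis) to reduce the functional equality to the level-by-level equality $J(t,s,u)=J^\star(t,s,u)$ off a null set $M$, apply Theorem~\ref{thm_burchard} on the ``good set'' $G$ of level triples satisfying the strict triangle inequalities, and propagate the resulting centered ellipsoid and translation vectors across levels before reassembling via~\eqref{eq_layer_cake}. The reflection/permutation symmetry you invoke to reduce to the case where $f^\star$ and $g^\star$ are strictly symmetric decreasing is also correct.

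You honestly flag the middle step as the technical heart, and indeed that is where the sketch is only a plausibility argument rather than a proof. A complete argument has to (i) show that $G\setminus M$ projects onto essentially all admissible levels on \emph{each} of the three axes --- you mention only the $t$- and $u$-projections, but the $s$-projection is the one that matters for pinning down every level set of $k$, whose symmetral need not be strictly decreasing --- and (ii) perform the chaining through a Fubini argument in the level variables so that removing the null set $M$ does not disconnect the boxes. Both points use precisely the surjectivity onto $(0,\infty)$ and strict monotonicity of $r_f$, $r_g$, which is where the hypothesis on $f^\star,g^\star$ enters, so your diagnosis of where the hypothesis is used is accurate. In short: right approach, consistent with the source it is deferred to, but the central paragraph is a gap that a full write-up would have to close.
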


\subsection{Star-shaped sets and dual mixed volumes}
A closed set $K \subseteq \R^n$ is star-shaped (with respect to the origin) if the interval $[0,x]\subset K$ for every $x\in K$. The gauge function $\|\cdot\|_K : \R^n \to [0,\infty]$ of a star-shaped set is defined as
\[\|x\|_K = \inf\{ \lambda > 0 : x \in \lambda K\}\]
and the radial function $\rho_K:\R^n \setminus\{0\} \to [0,\infty]$ as
\[\rho_K(x) = \|x\|_K^{-1} = \sup\{\lambda \geq 0:\lambda x \in K\}.\]
For the $n$-dimensional unit ball $\B$, we have $\Vert \cdot \Vert_{\B}=\vert\cdot \vert$. The $n$-dimensional Lebesgue measure or volume of a star-shaped set $K\subset\R^n$ with measurable radial function is given by
\[\vol{K}= \frac1 n \int_{\sn} \rho_K(\xi)^n\d \xi.\]
We call $K$ a star body if its radial function is strictly positive and continuous in $\R^n \setminus \{0\}$. 

Let $\alpha\in\R\backslash\{0,n\}$. For star-shaped sets $K, L \subseteq \R^n$  with measurable radial functions, the dual mixed volume is defined as
\[\tilde V_\alpha(K,L) = \frac 1n \int_{\sn} \rho_K(\xi)^{n-\alpha} \rho_L(\xi)^{\alpha} \d \xi.\]
Note that 
\[\tilde V_\alpha(K,K) = \vol{K}.\]
For $0<\alpha<n$ and star-shaped sets $K, L \subseteq \R^n$ of finite volume, the dual mixed volume inequality states that 
\begin{equation}
    \label{eq_mixedvolume}
    \tilde V_{\alpha}(K,L) \leq \vol{K}^{({n-\alpha})/n} \vol{L}^{ \alpha/n}.
\end{equation}
Equality holds if and only if $K$ and $L$ are dilates, where we say that star-shaped sets $K$ and $L$ are dilates if $\rho_K=c\,\rho_L$ almost everywhere on $\sn$ for some $c\ge 0$. The definition of dual mixed volume for star bodies is due to Lutwak \cite{Lutwak75}, where also the dual mixed volume inequality \eqref{eq_mixedvolume} is derived from H\"older's inequality. For $0<\alpha<n$ and star-shaped sets of finite volume, it follows from \eqref{eq_mixedvolume} that the dual mixed volume is finite.
For star-shaped sets $K,L \subseteq \R^n$ and $\alpha>n$, the dual mixed volume inequality states that 
\begin{equation}
    \label{eq_mixedvolumere}
    \tilde V_{\alpha}(K,L) \geq \vol{K}^{({n-\alpha})/n} \vol{L}^{ \alpha/n}.
\end{equation}
It follows from the equality case of H\"older's inequality that equality holds for finite $\tilde V_{\alpha}(K, L)$ if and only if $K$ and $L$ are dilates. See \cite{Schneider:CB2, Gardner} for more information on dual mixed volumes.

\subsection{Log-concave and $s$-concave functions}
A function $f: \R^n\to [0,\infty)$ is log-concave, if $x\mapsto \log f(x)$ is a concave function on $\R^n$ with values in $[-\infty, \infty)$. 
For $s>0$, a function $f: \R^n\to [0,\infty)$ is $s$-concave, if $f^s$ is concave on its support.

The following result is a consequence of the Pr\'ekopa--Leindler inequality. It can be found as Theorem 11.3 in \cite{Gardner_Gorizia}.  Log-concave functions are included as the case $s=0$.
\begin{lemma}\label{lem_GG}
Let $s\ge 0$. If $f, g\in L^1(\R^n)$ are $s$-concave, then their convolution is  $s/(ns+2)$-concave.
\end{lemma}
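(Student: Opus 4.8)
The plan is to derive the lemma from the Borell--Brascamp--Lieb inequality, the $p$-concave refinement of the Pr\'ekopa--Leindler inequality, which is what the cited reference ultimately invokes. Throughout I would use the standard convention that the $\lambda$-weighted $p$-mean $M_p^\lambda(a,b)=\big((1-\lambda)a^p+\lambda b^p\big)^{1/p}$ of $a,b>0$ is extended by $M_p^\lambda(a,b)=0$ whenever $a=0$ or $b=0$, for \emph{every} exponent $p$; with this convention, ``$h$ is $s$-concave'' is equivalent to $h((1-\lambda)x+\lambda y)\ge M_s^\lambda(h(x),h(y))$ for all $x,y\in\R^n$ and all $\lambda\in(0,1)$.

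The first step is an elementary fact about products in separate variables: if $\phi$ is $p$-concave on $\R^{m}$ and $\psi$ is $q$-concave on $\R^{k}$ with $p,q>0$, then $(u,v)\mapsto\phi(u)\,\psi(v)$ is $r$-concave on $\R^{m+k}$, where $1/r=1/p+1/q$. Along a segment, $p$- and $q$-concavity bound the left side below by a product $M_p^\lambda(\cdot)\,M_q^\lambda(\cdot)$ of $p$- and $q$-means, and the weighted H\"older inequality applied with the conjugate exponents $p/r$ and $q/r$ (note $r/p+r/q=1$) gives $M_p^\lambda(a_1,a_2)\,M_q^\lambda(b_1,b_2)\ge M_r^\lambda(a_1b_1,a_2b_2)$; the degenerate cases in which one value vanishes are immediate from the convention above. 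For $s=0$ the analogous statement (a product of log-concave functions in separate variables is log-concave) is trivial since $\log(\phi(u)\psi(v))=\log\phi(u)+\log\psi(v)$.

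Applying this with $\phi=f$, $\psi=g$ and $p=q=s$ shows that $(u,v)\mapsto f(u)g(v)$ is $(s/2)$-concave on $\R^n\times\R^n$ (log-concave if $s=0$), and since $s$-concavity is preserved under precomposition with the volume-preserving linear map $(x,w)\mapsto(x-w,w)$, the function $\Psi(x,w)=f(x-w)g(w)$ is $(s/2)$-concave on $\R^n\times\R^n$; note $\int_{\R^n}\Psi(x,w)\d w=(f\ast g)(x)$, which is finite for almost every $x$ because $f,g\in L^1(\R^n)$. Now fix $x_0=(1-\lambda)x_1+\lambda x_2$ and put $h_i=\Psi(x_i,\cdot)$, so that $\int_{\R^n} h_i=(f\ast g)(x_i)$. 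Evaluating the $(s/2)$-concavity of $\Psi$ at points $(x_0,w_0)=(1-\lambda)(x_1,w_1)+\lambda(x_2,w_2)$ yields $h_0(w_0)\ge M_{s/2}^\lambda(h_1(w_1),h_2(w_2))$, and the Borell--Brascamp--Lieb inequality applied in the $n$-dimensional $w$-variable with exponent $s/2\ge-1/n$ gives
\[
(f\ast g)(x_0)=\int_{\R^n} h_0\ \ge\ M_{\frac{s/2}{n(s/2)+1}}^\lambda\Big(\int_{\R^n} h_1,\int_{\R^n} h_2\Big)=M_{\frac{s}{ns+2}}^\lambda\big((f\ast g)(x_1),(f\ast g)(x_2)\big),
\]
since $\tfrac{s/2}{n(s/2)+1}=\tfrac{s}{ns+2}$. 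As $x_1,x_2,\lambda$ were arbitrary, $f\ast g$ is $s/(ns+2)$-concave; for $s=0$ this reduces to Pr\'ekopa's theorem.

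This is essentially the classical argument behind the Borell--Brascamp--Lieb hierarchy, so there is no deep obstacle; the point that must be handled carefully is the convention $M_p^\lambda(a,0)=0$ for $p>0$ used in the first step. With the ``naive'' value $(1-\lambda)^{1/p}a$ instead, both the product inequality and --- more seriously --- the pointwise hypothesis of Borell--Brascamp--Lieb in the last step would fail (already for $f=g=\chi_{[0,1]}$ in one dimension). A secondary technical point is to make sure $f\ast g$ is finite on the convex set where it matters, so that Borell--Brascamp--Lieb applies and the conclusion is an honest concavity statement on the (convex) essential support of $f\ast g$; this is exactly what the hypothesis $f,g\in L^1(\R^n)$ provides.
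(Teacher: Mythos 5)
Your proof is correct and follows the standard argument behind the result the paper invokes: the paper itself does not prove the lemma but cites it to Theorem~11.3 of Gardner's Brunn--Minkowski survey, which is established by exactly the Borell--Brascamp--Lieb route you take (the product rule $1/r=1/p+1/q$ for $p$- and $q$-concave factors in separate variables, a volume-preserving change of variables, and then BBL marginalization in $w$, yielding $\tfrac{s/2}{n(s/2)+1}=\tfrac{s}{ns+2}$). Your attention to the convention $M_p^\lambda(a,0)=0$ and to the a.e.\ finiteness of $f\ast g$ are the right points to flag, and the $s=0$ case correctly reduces to Pr\'ekopa's theorem.
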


\noindent
Here, the convolution of $f,g\in L^1(\R^n)$ is defined by
\[f*g(x)=\int_{\R^n} f(x-y)\,g(y)\d y\]
for $x\in\R^n$. We define $f^-$ by $f^-(x)=f(-x)$ for $x\in\R^n$ and will often consider the function 
\begin{equation}\label{auto}
    f*f^-(x)=\int_{\R^n} f(x)\,f(x+y)\d y
\end{equation}
for $x\in\R^n$.

We also consider the limiting case $s=\infty$. We say that a function $f:\R^n\to[0,\infty)$ is $s$-concave with $s=\infty$ if it is a multiple of the indicator function of a convex body.

\subsection{Fractional $L^2$ polar projection bodies}

For measurable $f:\R^n\to \R$ and $0<\alpha<1$, the $\alpha$-fractional $L^2$ polar projection body of $f$, denoted by $\Pi_2^{\ast, \alpha} f$, was defined in \cite{HaddadLudwig_Lpfracsob} by its radial function 
for $\xi\in\sn$ as
\begin{equation}\label{eq_defpp}
\rho_{\Pi_2^{\ast, \alpha} f}(\xi)^{-2\alpha}= \int_0^\infty t^{-2 \alpha-1}\, \int_{\R^n}\vert f(x +t\xi)-f(x)\vert^2 \d x\d t.
\end{equation}
The fractional Sobolev space $W^{\alpha,2}(\R^n)$ is the set of all $f\in L^2(\R^n)$ such that
\[\int_{\R^n}\int_{\R^n} \frac{\vert f(x)-f(y)\vert^2}{\vert x-y\vert^{n+2\alpha}}\d x\d y<\infty.\]
The affine fractional $L^2$ Sobolev inequality \cite[Theorem 1]{HaddadLudwig_Lpfracsob} states  that
\begin{equation}\label{eq_L2fracsob}
\Vert f\Vert_{{\frac {2n }{n-2\alpha}}}^2
\le
\sigma_{n,2,\alpha}\, n\omega_n^{1+\frac{2\alpha}n} \vol{\Pi_2^{\ast, \alpha} f}^{-\frac {2\alpha}n}
\end{equation}
for $f\in W^{\alpha,2}(\R^n)$ and $0<\alpha<1$, where $\sigma_{n,2,\alpha}$ is an explicitly known constant.

\goodbreak
\section{The Star-shaped Set $\hls \alpha f$}\label{sec_hls_body}

Let $f: \R^n\to [0,\infty)$ be measurable, $K\subset \R^n$  star-shaped with measurable radial function and $\alpha>0$.
Anisotropic fractional Sobolev norms were introduced in \cite{Ludwig:fracperi, Ludwig:fracnorm} and used in \cite{HaddadLudwig_fracsob}. Here, we consider 
\[\int_{\R^n}\int_{\R^n} \frac{f(x) f(y)}{\|x-y\|_K^{n-\alpha}}\d x\d y,\]
an anisotropic version of the functional from \eqref{eq_HLS}.
Using Fubini's theorem, polar coordinates, and \eqref{eq_defhls}, we obtain that
\begin{align*}
	\int_{\R^n}\int_{\R^n} \frac{f(x) f(y)}{\|x-y\|_K^{n-\alpha}}\d x\d y
	&= \int_{\R^n}\int_{\R^n} \frac{ f(y) f(y+z)}{\|z\|_K^{n-\alpha}} \d y \d z \\
	&= \int_{\sn} \int_0^\infty \rho_K(t \xi )^{n-\alpha}\,  t^{n-1} \int_{\R^n} f(y) f(y+t \xi)  \d y \d t \d\xi \\
	&= \int_{\sn} \rho_K(\xi )^{n-\alpha} \int_0^\infty t^{\alpha-1} \int_{\R^n} f(y) f(y+t \xi) \d y \d t \d\xi \\
	&= \int_{\sn} \rho_K(\xi )^{n-\alpha} \rho_{\hls\alpha f}(\xi)^\alpha d\xi.
\end{align*}
Hence,
\begin{equation}\label{eq_new_def_c}
\int_{\R^n}\int_{\R^n} \frac{f(x) f(y)}{\|x-y\|_K^{n-\alpha}}\d x\d y=n \tilde V_\alpha (K, \hls\alpha f)
\end{equation}
for measurable $f:\R^n\to [0,\infty)$ and star-shaped $K\subset\R^n$ with measurable radial function.

Note that, using polar coordinates and Fubini's theorem, we obtain that
\begin{align}
\begin{split}\label{eq_hlsn}
    \vol{\hls nf}
    &=\frac 1n \int_{\sn} \rho_{\hls nf}(\xi)^n \d \xi  \\
    &=\frac 1n \int_{\sn} \int_0^\infty \int_{\R^n}t^{n-1} f(x) f(x+t\xi) \d x \d t \d \xi \\
    &=\frac 1n \int_{\R^n} \int_{\R^n} f(x) f(x+y) \d y \d x  \\
    &=\frac 1n \|f\|_1^2
\end{split}
\end{align}
for measurable $f:\R^n\to [0,\infty)$. 
\goodbreak
We remark that for $0< \alpha <n$ and given measurable $f:\R^n\to[0,\infty)$, the dual mixed volume inequality \eqref{eq_mixedvolume}  and \eqref{eq_new_def_c} imply that
\begin{equation}
\sup\big\{ \int_{\R^n}\int_{\R^n} \frac{f(x) f(y)}{\|x-y\|_K^{n-\alpha}}\d x\d y: K\subset \R \text{ star-shaped}, \vol{K}=\omega_n\big\} = n\omega_n^{1-\frac \alpha n} \vol{\hls \alpha f}^\frac{\alpha}n
\end{equation}
is attained precisely for a suitable dilate of $\hls \alpha f$ if $\vol{\hls \alpha f}$ is finite. In this sense, $\hls \alpha f$ is the optimal choice of the star-shaped set $K$ for given $f$. For $\alpha >n$ and given measurable $f:\R^n\to[0,\infty)$, 
the dual mixed volume inequality \eqref{eq_mixedvolumere}  and \eqref{eq_new_def_c} imply that
\begin{equation}
\inf\big\{ \int_{\R^n}\int_{\R^n} \frac{f(x) f(y)}{\|x-y\|_K^{n-\alpha}}\d x\d y: K\subset \R \text{ star-shaped}, \vol{K}=\omega_n\big\} =n\omega_n^{1-\frac \alpha n} \vol{\hls \alpha f}^\frac{\alpha}n
\end{equation}
is attained precisely for a suitable dilate of $\hls \alpha f$ if $\vol{\hls \alpha f}$ is finite. Again, $\hls \alpha f$ is the optimal choice in this sense.

\goodbreak
We mention the following property of $\hls \alpha f$  for log-concave $f$.

\begin{proposition}\label{prop_convex}
If $f:\R^n\to [0,\infty)$ is log-concave and in $L^1(\R^n)$, then $\hls \alpha f$ is a convex body for every $\alpha>0$.
\end{proposition}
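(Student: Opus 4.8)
The plan is to recognise $\hls\alpha f$ as a dilate of a Ball-type body built from the autocorrelation of $f$, and then to invoke the convexity of such bodies. Put $\tilde f(x):=f(-x)$ and $h:=f*\tilde f$, so that $h(y)=\int_{\R^n}f(x)f(x+y)\d x$ and, by \eqref{eq_defhls},
\[
\rho_{\hls\alpha f}(\xi)^\alpha=\int_0^\infty t^{\alpha-1}h(t\xi)\d t\qquad(\xi\in\sn).
\]
Write $\Psi(x):=\int_0^\infty t^{\alpha-1}h(tx)\d t$ for $x\ne0$. The substitution $t\mapsto t/\lambda$ gives $\Psi(\lambda x)=\lambda^{-\alpha}\Psi(x)$ for $\lambda>0$, so $\Psi^{1/\alpha}$ is homogeneous of degree $-1$ and agrees with $\rho_{\hls\alpha f}$ on $\sn$; hence $\rho_{\hls\alpha f}(x)^\alpha=\Psi(x)$ for all $x\ne0$ and $\hls\alpha f=\{x\in\R^n:\Psi(x)\ge1\}$. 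By the same homogeneity every superlevel set $\{x:\Psi(x)\ge c\}$ with $c>0$ is a dilate of $\hls\alpha f$, so it is enough to prove that one of them is a convex body.

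Next I would record the properties of $h$ (we may assume $f$ non-zero, as otherwise $\hls\alpha f=\{0\}$). Since $f$ and $\tilde f$ are log-concave and in $L^1(\R^n)$, Lemma \ref{lem_GG} with $s=0$ shows $h=f*\tilde f$ is log-concave; moreover $h$ is even and $\|h\|_1=\|f\|_1^2<\infty$. A log-concave integrable function is bounded, so $f\in L^1(\R^n)\cap L^\infty(\R^n)\subset L^2(\R^n)$ and therefore $0<h(0)=\|f\|_2^2<\infty$. Being log-concave with $h(0)\in(0,\infty)$, $h$ is positive and continuous near the origin, so $\Psi(x)>0$ for $x\ne0$; being log-concave and integrable, $h$ decays exponentially along every ray from the origin, so $\Psi(x)<\infty$ for $x\ne0$.

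The remaining, and essential, point is that the superlevel sets of $\Psi$ are convex. This is exactly the convexity of the bodies attached to logarithmically concave functions due to Ball (K.~Ball, \textit{Logarithmically concave functions and sections of convex sets in\/ $\R^n$}, Studia Math.\ \textbf{88} (1988), 69--84): for log-concave $g:\R^n\to[0,\infty)$ with $0<g(0)<\infty$ and any $\alpha>0$, the set $\{x\in\R^n:\int_0^\infty t^{\alpha-1}g(tx)\d t\ge1\}$ is convex. Applying this with $g=h$ and using the bounds $0<\Psi(x)<\infty$ for $x\ne0$, the set $\hls\alpha f=\{x:\Psi(x)\ge1\}$ is a bounded convex set with the origin in its interior, i.e.\ a convex body, which is the claim. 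I expect this convexity to be the real obstacle: it cannot be obtained by applying the Pr\'ekopa--Leindler inequality separately along the rays through $x$, $y$ and $(1-\lambda)x+\lambda y$, for after using log-concavity of $h$ on each such ray one is reduced to bounding $\int_0^\infty t^{\alpha-1}h(tx)^{1-\lambda}h(ty)^\lambda\d t$ from below, whereas H\"older's inequality only bounds it from above; the convexity of the level sets of $h$, viewed as $n$-dimensional bodies, has to be used, which is what Ball's argument does. I would therefore cite Ball's theorem rather than reprove it, the genuinely new inputs being the identity $\rho_{\hls\alpha f}(x)^\alpha=\Psi(x)$ and the log-concavity of $f*\tilde f$ supplied by Lemma \ref{lem_GG}.
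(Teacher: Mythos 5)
Your proof is correct and follows essentially the same route as the paper: the paper likewise observes that $y\mapsto\int_{\R^n}f(x)f(x+y)\,\mathrm{d}x$ is log-concave (via Lemma \ref{lem_GG}) and in $L^1(\R^n)$ (via Young's inequality), and then invokes Ball's theorem \cite[Theorem 5]{Ball1988} (or \cite[Corollary 4.2]{GZ}) to conclude. You have simply spelled out in more detail the verifications (evenness, $0<h(0)<\infty$, decay) that make Ball's theorem applicable.
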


\begin{proof}
First, observe that $f*f^-$ is in $L^1(\R^n)$ due to Fubini's theorem.
By Lemma \ref{lem_GG}, it is also log-concave.
Thus we may apply the well-known result of Ball \cite[Theorem 5]{Ball1988} (or see \cite[Corollary 4.2]{GZ}), saying that if a non-negative function $g$ is even, log-concave and integrable, then the function
\[\xi \mapsto \left(\int_0^\infty t^{\alpha-1} g(t\xi) \d t\right)^{-1/\alpha}\]
is a norm for every $\alpha>0$.
\end{proof}

\section{Affine HLS Inequalities for $0<\alpha<n$}

The following result is a consequence of the Riesz rearrangement inequality and its equality case from Theorem \ref{thm_burchard2}.

\begin{lemma}\label{lem_strict_rearrangement}
Let $q>0$ and $K\subset\R^n$ a star-shaped set with measurable radial function and $\vol{K}>0$.
For a non-zero, measurable function $f:\R^n \to [0,\infty)$ such that
\begin{equation}
\int_{\R^n}\int_{\R^n} \frac{f(x) f(y)}{\Vert x-y\Vert_K^q} \d x \d y<\infty
\end{equation}
and  strictly symmetric decreasing $f^\star$, 
there is equality in
\begin{equation}\label{eq_riesz}
       \int_{\R^n}\int_{\R^n}  \frac{f(x) f(y)}{\Vert x-y\Vert_K^q} \d x \d y \leq \int_{\R^n}\int_{\R^n} \frac{f^\star(x) f^\star(y)}{\Vert x-y\Vert_{K^\star}^q} \d x \d y
\end{equation}
if and only if $K$ is a centered ellipsoid and $f$ is a  translate of $f^\star$.
\end{lemma}
\begin{proof}
The inequality \eqref{eq_riesz} follows from the Riesz rearrangement inequality, since $\rho_{K^\star}^q = ( \rho_K^q)^\star$. This follows directly from the definition \eqref{eq_layer_cake_schwarz} by observing that $\rho_K(y)^q \geq t$ if and only if $y \in t^{-1/q} K$, and consequently,
    \begin{align}
        (\rho_K^q)^\star(x)
        &= \int_0^\infty \chi_{\{\rho_K^q \geq t\}^\star} (x) \d t \\
        &= \int_0^\infty \chi_{t^{-1/q} K^\star} (x) \d t \\
        &= \rho^q_{K^\star}(x),
    \end{align}
for $x\in\R^n\backslash\{0\}$. For the equality case, we use Theorem \ref{thm_burchard2} with $f=g$. The equality in \eqref{eq_riesz} implies that $b=0$, $\rho_K=\rho_{K^\star}\circ \phi^{-1}$ for a volume-preserving $\phi\in\gln$ and that $f$ is a translate of $f^\star$. Since $K^\star$ is a ball, this concludes the proof. 
\end{proof}

\goodbreak
We require the following lemmas for the proof of Theorem \ref{thm_aHLS}.

\begin{lemma}\label{lem_smooth_compact_support_Riesz}
Let  $\,0<\alpha<n$ and $K\subset\R^n$ be star-shaped with measurable radial function. If  $
f:\R^n\to[0,\infty)$ is non-zero and measurable
 and
\begin{equation}
\int_{\R^n}\int_{\R^n} \frac{f(x) f(y)}{\Vert x-y\Vert_K^{n-\alpha}} \d x \d y<\infty,
\end{equation}
then
\begin{equation}
\tilde V_\alpha(K, \hls\alpha f) \leq \tilde V_\alpha(K^\star, \hls\alpha f^\star).
\end{equation}
For $\vol{K}>0$ and $f^\star$ strictly symmetric decreasing, there is equality if and only if $K$ is a centered ellipsoid and $f$ is a translate of $f^\star$.
\end{lemma}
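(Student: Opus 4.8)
The plan is to reduce the inequality $\tilde V_\alpha(K,\hls\alpha f)\le \tilde V_\alpha(K^\star,\hls\alpha f^\star)$ to the Riesz rearrangement inequality via the identity \eqref{eq_new_def_c}, namely $n\tilde V_\alpha(K,\hls\alpha f)=\int_{\R^n}\int_{\R^n} f(x)f(y)\|x-y\|_K^{-(n-\alpha)}\d x\d y$. Writing $k(z)=\|z\|_K^{-(n-\alpha)}$, the left side becomes $\int\int f(x)\,k(x-y)\,f(y)\d x\d y$, so Theorem \ref{thm_BLL} gives $\int\int f(x)k(x-y)f(y)\d x\d y\le \int\int f^\star(x)k^\star(x-y)f^\star(y)\d x\d y$. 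The remaining point is to identify $k^\star$ with $\|\cdot\|_{K^\star}^{-(n-\alpha)}$: since $k$ is a function whose superlevel sets are the dilates $\{\|z\|_K\le \lambda\}=\lambda^{-1}K$ (a set of volume $\lambda^{-n}\vol K$), its Schwarz symmetral has superlevel sets $(\lambda^{-1}K)^\star=\lambda^{-1}K^\star$, which are precisely the superlevel sets of $z\mapsto\|z\|_{K^\star}^{-(n-\alpha)}$; hence $k^\star(z)=\|z\|_{K^\star}^{-(n-\alpha)}$ and the right side equals $n\tilde V_\alpha(K^\star,\hls\alpha f^\star)$ by \eqref{eq_new_def_c} applied to $f^\star$ and $K^\star$. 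This gives the inequality. One subtlety to address is that $k$ should have superlevel sets of finite measure, which holds because $\vol K<\infty$ is guaranteed — if $\vol K=\infty$ then the left side is already $+\infty$ unless $f\equiv 0$, contradicting $f$ non-zero together with finiteness of the double integral; so we may assume $\vol K<\infty$, and in fact positivity of $\vol K$ is only needed for the equality statement.

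For the equality characterization, I would simply invoke Lemma \ref{lem_strict_rearrangement} with $q=n-\alpha$ and $K$ as given: under the hypotheses $\vol K>0$, $f$ non-zero with $\int\int f(x)f(y)\|x-y\|_K^{-(n-\alpha)}\d x\d y<\infty$, and $f^\star$ strictly symmetric decreasing, that lemma states equality in $\int\int f(x)f(y)\|x-y\|_K^{-q}\d x\d y\le\int\int f^\star(x)f^\star(y)\|x-y\|_{K^\star}^{-q}\d x\d y$ holds iff $K$ is a centered ellipsoid and $f$ is a translate of $f^\star$. Since the two double integrals are exactly $n\tilde V_\alpha(K,\hls\alpha f)$ and $n\tilde V_\alpha(K^\star,\hls\alpha f^\star)$ by the computation above, equality in the stated inequality is equivalent to equality there, and the characterization transfers verbatim.

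The only genuine work is the identification of $k^\star$, and I expect that to be the main (mild) obstacle: one must be careful that the layer-cake description of $k$ and of $k^\star$ matches, i.e.\ that the map $\lambda\mapsto \vol{\{z:\|z\|_K^{-(n-\alpha)}\ge \lambda\}}$ is correctly computed and that symmetrizing commutes with dilation of the superlevel sets. Since $n-\alpha>0$ this is monotone and the superlevel sets of $k$ are genuinely the dilates $c\,K$ for appropriate $c=c(\lambda)$, with $(cK)^\star=c\,K^\star$ because Schwarz symmetrization commutes with dilations (both sides are centered balls of volume $c^n\vol K$). After that, everything is a direct substitution into \eqref{eq_new_def_c}. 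I would also note in passing that if $\vol{\hls\alpha f^\star}$ or $\tilde V_\alpha(K^\star,\hls\alpha f^\star)$ were infinite the inequality is trivial, so no integrability issue obstructs the argument on the right-hand side.
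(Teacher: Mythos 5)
Your proof matches the paper's: both derive the inequality directly from \eqref{eq_new_def_c} together with the Riesz rearrangement inequality (Theorem~\ref{thm_BLL}), and then obtain the equality characterization by invoking Lemma~\ref{lem_strict_rearrangement} with $q=n-\alpha$. (One small slip in your parenthetical: $\{z:\|z\|_K\le\lambda\}=\lambda K$, not $\lambda^{-1}K$, so the superlevel set $\{k\ge s\}$ is $s^{-1/(n-\alpha)}K$ of volume $s^{-n/(n-\alpha)}\vol{K}$; this does not affect the argument, since the essential point that these sets are dilates of $K$ and that Schwarz symmetrization commutes with dilation is correct.)
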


\goodbreak
\begin{proof}
By \eqref{eq_new_def_c} and the Riesz rearrangement inequality, Theorem \ref{thm_BLL}, we have
\[\tilde V_\alpha(K, \hls\alpha f) \leq \tilde V_\alpha(K^\star, \hls\alpha f^\star).\]
By Lemma \ref{lem_strict_rearrangement}, there is equality if and only if $K$ is a centered ellipsoid and $f$ is a translate of $f^\star$.
\end{proof}

\goodbreak
\begin{lemma}\label{lem_new_rearrangement}
Let $0<\alpha<n$ and $p=2\alpha/(n+\alpha)$.
For non-negative $f\in L^p(\R^n)$, 
	\begin{equation}
	\vol{\hls\alpha f} \leq \vol{\hls\alpha f^\star}.
	\end{equation}
For $f^\star$ strictly symmetric decreasing with $\vol{\hls \alpha f^\star}<\infty$,
there is equality if and only if $f$ is a translate of $f^\star$.
\end{lemma}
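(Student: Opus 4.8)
The plan is to apply Lemma~\ref{lem_smooth_compact_support_Riesz} with the star-shaped test set $K$ taken to be $\hls{\alpha}f$ itself. By \eqref{eq_new_def_c} and the identity $\tilde V_\alpha(K,K)=\vol{K}$ this choice turns the anisotropic functional on the left of Lemma~\ref{lem_smooth_compact_support_Riesz} into $n\,\vol{\hls{\alpha}f}$, so (once the integrability hypothesis is checked) the rearrangement bound reads
\[
\vol{\hls{\alpha}f}=\tilde V_\alpha(\hls{\alpha}f,\hls{\alpha}f)\le\tilde V_\alpha\big((\hls{\alpha}f)^\star,\hls{\alpha}f^\star\big).
\]
Since $f^\star$ is radially symmetric, its autocorrelation is rotation invariant and $\hls{\alpha}f^\star$ is a centered ball; and $(\hls{\alpha}f)^\star$ is a centered ball of volume $\vol{\hls{\alpha}f}$. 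Hence the dual mixed volume inequality \eqref{eq_mixedvolume} gives
\[
\tilde V_\alpha\big((\hls{\alpha}f)^\star,\hls{\alpha}f^\star\big)\le\vol{\hls{\alpha}f}^{\frac{n-\alpha}n}\vol{\hls{\alpha}f^\star}^{\frac{\alpha}n},
\]
and dividing by $\vol{\hls{\alpha}f}^{(n-\alpha)/n}$ yields $\vol{\hls{\alpha}f}\le\vol{\hls{\alpha}f^\star}$.

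The one delicate point is finiteness: forming $(\hls{\alpha}f)^\star$, checking the integrability hypothesis of Lemma~\ref{lem_smooth_compact_support_Riesz} for $K=\hls{\alpha}f$, and carrying out the division above all require $0<\vol{\hls{\alpha}f}<\infty$, which is not available a priori. I would resolve this by a monotone truncation. Setting $f_m=\min(f,m)\,\chi_{m\B}$, the function $f_m$ is bounded with support in $m\B$ and $f_m\uparrow f$, and its autocorrelation is bounded and supported in $2m\B$, so by \eqref{eq_defhls} the radial function of $\hls{\alpha}f_m$ is bounded and $\vol{\hls{\alpha}f_m}<\infty$. All the finiteness needed for the argument above is then available for $f_m$, so (the case $\vol{\hls{\alpha}f_m}=0$ being trivial) $\vol{\hls{\alpha}f_m}\le\vol{\hls{\alpha}f_m^\star}$. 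Monotone convergence in \eqref{eq_defhls} gives $\vol{\hls{\alpha}f_m}\uparrow\vol{\hls{\alpha}f}$, while $f_m\le f$ forces $f_m^\star\le f^\star$ and hence $\vol{\hls{\alpha}f_m^\star}\le\vol{\hls{\alpha}f^\star}$; letting $m\to\infty$ proves the inequality in general (and records en route that $\vol{\hls{\alpha}f}<\infty$ whenever $\vol{\hls{\alpha}f^\star}<\infty$).

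For the equality statement, assume $f^\star$ is strictly symmetric decreasing, $\vol{\hls{\alpha}f^\star}<\infty$, and equality holds. Then $\vol{\hls{\alpha}f}$ is finite, and it is positive because a strictly symmetric decreasing $f^\star$ is non-zero and therefore has positive autocorrelation and positive $\rho_{\hls{\alpha}f^\star}$; so no truncation is needed and Lemma~\ref{lem_smooth_compact_support_Riesz} applies directly with $K=\hls{\alpha}f$. The chain of inequalities above then collapses to a chain of equalities, so in particular equality holds in Lemma~\ref{lem_smooth_compact_support_Riesz}; since $\vol{\hls{\alpha}f}>0$ and $f^\star$ is strictly symmetric decreasing, its equality condition yields that $f$ is a translate of $f^\star$. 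Conversely, if $f$ is a translate of $f^\star$ then $f$ and $f^\star$ have identical autocorrelations, so $\hls{\alpha}f=\hls{\alpha}f^\star$ and equality holds.

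Apart from the truncation, everything is routine: \eqref{eq_new_def_c} and $\tilde V_\alpha(K,K)=\vol{K}$ are already recorded, identifying the relevant Schwarz symmetrals as centered balls is elementary, and the limiting argument is standard monotone convergence. Thus the main obstacle --- and essentially the only nontrivial idea beyond the choice of the optimal test body $K=\hls{\alpha}f$ --- is arranging the finiteness needed to invoke the rearrangement machinery, which the truncation supplies.
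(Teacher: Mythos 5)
Your proof is correct and follows essentially the same route as the paper: take $K=\hls\alpha f$ in Lemma~\ref{lem_smooth_compact_support_Riesz}, combine with the dual mixed volume inequality \eqref{eq_mixedvolume}, and handle the possible infiniteness of $\vol{\hls\alpha f}$ by a monotone truncation. The only inessential difference is your choice of truncation $f_m=\min(f,m)\,\chi_{m\B}$ (which caps the values as well as the support) versus the paper's $f_{(k)}=f\,\chi_{k\B}$; your variant makes the finiteness of $\vol{\hls\alpha f_m}$ a bit more immediate, but the argument is otherwise the same.
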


\begin{proof}
First, assume that $\,\vol{\hls \alpha f}<\infty$.  By Lemma \ref{lem_smooth_compact_support_Riesz} with $K=\hls \alpha f$ and the dual mixed volume inequality \eqref{eq_mixedvolume} for $0<\alpha<n$,  we have
\begin{align}
	\vol{\hls\alpha f}
	&= \tilde V_\alpha(\hls \alpha f, \hls\alpha f)\\ 
	&\leq \tilde V_\alpha((\hls\alpha f)^\star, \hls\alpha f^\star)\\
	&\leq \vol{(\hls\alpha f)^\star}^{1-\frac {\alpha}n} \vol{\hls\alpha f^\star}^{\frac \alpha n}\\
	&= \vol{\hls \alpha f}^{1-\frac {\alpha}n} \vol{\hls\alpha f^\star}^{\frac \alpha n}.
\end{align}
The equality case follows from Lemma \ref{lem_smooth_compact_support_Riesz}. 

\goodbreak
Second, assume that $\vol{\hls\alpha f} = \infty$. For $k\ge1$, define 
\[f_{(k)}(x) = f(x) \chi_{k \B}(x).
\]
Note that $f_{(k)}$ is non-decreasing with respect to $k$ and converges to $f$ pointwise. 
By the monotone convergence theorem, we obtain
\begin{equation}
\lim_{k\to\infty}\int_0^\infty t^{\alpha-1} \int_{\R^n} f_{(k)}(x)f_{(k)}(x+t \xi) \d x \d t= \int_0^\infty t^{\alpha-1} \int_{\R^n} f(x)f(x+t \xi) \d x \d t
\end{equation}
and the convergence is monotone. A second application of the monotone convergence theorem shows that
\begin{multline}
\lim_{k\to\infty}\int_{\sn} \left( \int_0^\infty t^{\alpha-1} \int_{\R^n} f_{(k)}(x)f_{(k)}(x+t \xi) \d x \d t \right)^{\frac n\alpha} \d\xi \\= \int_{\sn} \left( \int_0^\infty t^{\alpha-1} \int_{\R^n} f(x)f(x+t \xi) \d x \d t\right)^{\frac n\alpha} \d\xi. 
\end{multline}
Hence,
\begin{equation}\label{eq_volume_converge}
\lim_{k\to\infty}\vol{\hls \alpha f_{(k)}} = \vol{\hls \alpha f} = \infty.
\end{equation}
Since $f\in L^p(\R^n)$, the function $f_{(k)}$ has compact support and $\vol{\hls \alpha f_{(k)}}<\infty$ for $k\ge 1$.
Since $f_{(k)} \leq f$ in $\R^n$, we deduce that $(f_{(k)})^\star \leq f^\star$ in $\R^n$ too, so the first part of the lemma implies that
\begin{equation}
\vol{\hls\alpha f_{(k)}} \leq \vol{\hls\alpha (f_{(k)})^\star} \leq \vol{\hls\alpha f^\star} 
\end{equation}
for $k\ge 1$.
It follows from \eqref{eq_volume_converge} that
 $\vol{\hls \alpha f_{(k)}} \to \infty$, which shows that
		\begin{equation}
		\vol{\hls\alpha f^\star} = \infty,
		\end{equation}
which is what we wanted to show.
\end{proof}

\subsection{Proof of Theorem \ref{thm_aHLS}}
Since $\|f\|_p=\|f^\star\|_p$, we obtain by using  the classical HLS inequality \eqref{eq_HLS}, equation \eqref{eq_new_def_c}, the equality case of \eqref{eq_mixedvolume} and the fact that $\hls\alpha f^\star$ is a ball, and Lemma \ref{lem_new_rearrangement} that
\begin{align}
	\gamma_{n,\alpha} \|f\|_p^2 
	&\geq \int_{\R^n}\int_{\R^n} \frac{f^\star(x)f^\star(y)}{|x-y|^{n-\alpha}} \d x \d y \\
	&= n \tilde V_\alpha (\B, \hls \alpha f^\star)\\
	&= n \omega_n^{{1-\frac \alpha n}} \vol{\hls\alpha f^\star}^{\frac\alpha n}\\
	&\geq n \omega_n^{{1-\frac \alpha n}} \vol{\hls\alpha f}^{\frac\alpha n}.
\end{align}
If there is equality throughout, then $f^\star$ realizes equality in the HLS inequality \eqref{eq_HLS}. Hence $f^\star(x) = a (1 + \lambda\, | x|^2)^{-n/p}$ for some $a\ge0$ and $\lambda>0$. Consequently, $f^\star$ is strictly symmetric decreasing, and we may apply Lemma \ref{lem_new_rearrangement} to obtain the equality case in the first inequality in Theorem \ref{thm_aHLS}.

For the second inequality, we set $K=\B$ in \eqref{eq_new_def_c} and apply the dual mixed volume inequality \eqref{eq_mixedvolume} to obtain
\begin{align}
    \int_{\R^n}\int_{\R^n} \frac{f(x)f(y)}{\vert x-y\vert^{n-\alpha}}\d x\d y
    &= n \tilde V_{\alpha}(\B, \hls \alpha f)
    \leq n \omega_n^{1-\frac{\alpha} n} \vol{\hls \alpha f}^{\frac\alpha n}.
\end{align}
There is equality precisely if $\hls \alpha f$ is a ball, which is the case for radially symmetric functions.

\section{Affine HLS Inequalities for $\alpha>n$}\label{sec_reverse_HLS}

Jingbo Dou and Meijun Zhu \cite{DouZhu15}  and William Beckner \cite{Beckner2015} established sharp HLS inequalities for $\alpha>n$ (also see \cite{NgoNguyen17, carrillo2019reverse}):
\begin{equation}\label{eq_HLS2}
\gamma_{n,\alpha}\Vert f\Vert_{\frac{2n}{n+\alpha}}^2\le 
\int_{\R^n}\int_{\R^n} \frac{f(x) f(y)}{\vert x-y\vert^{n-\alpha}} \d x\d y
\end{equation}
for non-negative $f\in L^p(\R^n)$  with $p = {2n}/({n+\alpha})$, where $\gamma_{n,\alpha}$ is defined in \eqref{eq_HLS_const}. There is equality if $f(x) = a (1 + \lambda\,|x-x_0|^2)^{-(n-\alpha)/2}$ for $x\in\R^n$ with $a\ge 0$, $\lambda>0$ and $x_0\in\R^n$.

We will establish sharp affine  HLS inequalities for $\alpha>n$ that strengthen and imply \eqref{eq_HLS2}. 
We require the following lemmas.

\goodbreak
The following result is a consequence of the Riesz rearrangement inequality and Theorem \ref{thm_burchard}. Note that the middle function in \eqref{eq_HLSr} has superlevel sets of infinite measure.

\begin{lemma}\label{lem_reverse_strict_rearrangement}
Let $q>0$ and $K\subset\R^n$ a star-shaped set with $0<\vol{K}<\infty$.
For non-negative, non-zero $f\in L^p(\R^n)$ such that
\begin{equation}\label{eq_HLSr}
\int_{\R^n}\int_{\R^n} {f(x) {\Vert x-y\Vert_K^q}\,  f(y)}\d x \d y<\infty,
\end{equation}
there is equality in
\begin{equation}\label{eq_riesz_equality}
       \int_{\R^n}\int_{\R^n} {f(x) {\Vert x-y\Vert_K^q} \,f(y)} \d x \d y \geq \int_{\R^n}\int_{\R^n} {f^\star(x) {\Vert x-y\Vert_{K^\star}^q}\, f^\star(y)} \d x \d y
\end{equation}
if and only if $K$ is a centered ellipsoid and $f$ is a translate of $f^\star$.
\end{lemma}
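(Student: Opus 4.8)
The plan is to reduce Lemma~\ref{lem_reverse_strict_rearrangement} to Theorem~\ref{thm_burchard} via the layer cake formula, exactly as one does for the forward Riesz inequality, but keeping careful track of the reversed inequality sign that comes from the increasing kernel $\|\cdot\|_K^q$. First I would rewrite the left side of \eqref{eq_riesz_equality} as a superposition of indicator functionals. Writing $f=\int_0^\infty \chi_{\{f\ge s\}}\d s$ and $\|z\|_K^q = q\int_0^\infty \lambda^{q-1}\chi_{\{\|\cdot\|_K>\lambda\}}(z)\d\lambda = q\int_0^\infty\lambda^{q-1}\big(1-\chi_{\lambda K}(z)\big)\d\lambda$, one expands
\[
\int_{\R^n}\!\!\int_{\R^n} f(x)\|x-y\|_K^q f(y)\d x\d y
= q\!\int_0^\infty\!\!\int_0^\infty\!\!\int_0^\infty\! \lambda^{q-1} s\, t\,\Big(I_\infty(s,t)-I(s,t,\lambda)\Big)\d\lambda\,\d s\,\d t,
\]
where $I(s,t,\lambda)=\int\!\int \chi_{\{f\ge s\}}(x)\chi_{\lambda K}(x-y)\chi_{\{f\ge t\}}(y)\d x\d y$ and $I_\infty(s,t)=|\{f\ge s\}|\,|\{f\ge t\}|$ is the "$\lambda=\infty$" value, which is symmetrization-invariant. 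Since $|\lambda K|=|\lambda K^\star|$ and the superlevel sets have the same measures as those of $f^\star$, Theorem~\ref{thm_burchard2} applied to the triple $\chi_{\{f\ge s\}},\chi_{\lambda K},\chi_{\{f\ge t\}}$ gives $I(s,t,\lambda)\le I^\star(s,t,\lambda)$, and hence $I_\infty-I\ge I_\infty-I^\star$; integrating yields \eqref{eq_riesz_equality}. (One must first note that if the left side of \eqref{eq_riesz_equality} is finite then each $I_\infty(s,t)$ is finite for a.e.\ $(s,t)$, since otherwise the kernel being bounded below on the complement of a bounded set would force divergence; this legitimizes writing $I_\infty-I$ rather than a single divergent integral.)

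For the equality characterization, suppose equality holds in \eqref{eq_riesz_equality}. Then for Lebesgue-a.e.\ triple $(s,t,\lambda)$ with $s,t$ in the (positive-measure) range of $f$, we must have equality $I(s,t,\lambda)=I^\star(s,t,\lambda)$ in the Riesz inequality for indicators. Now I would invoke Theorem~\ref{thm_burchard}: writing $\sigma(s),\tau(t)$ for the radii of $\{f\ge s\}^\star,\{f\ge t\}^\star$ and $r(\lambda)$ for the radius of $(\lambda K)^\star$, the transversality condition $|\sigma-\tau|<r<\sigma+\tau$ holds on a set of $(s,t,\lambda)$ of positive measure (indeed, fixing $s,t$ with $\sigma(s),\tau(t)>0$, the function $\lambda\mapsto r(\lambda)$ is continuous and strictly increasing from $0$ to $\infty$ since $0<|K|<\infty$, so it passes through the open interval $(|\sigma-\tau|,\sigma+\tau)$ on an interval of positive length). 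For such a triple, Theorem~\ref{thm_burchard} forces $\{f\ge s\}$, $\lambda K$, and $\{f\ge t\}$ to all be translates of a common centered ellipsoid $D$, with the translation vectors adding up correctly. In particular $\lambda K = b(\lambda)+r(\lambda)D$; since $K$ is star-shaped with respect to the origin and $\lambda K$ is a dilate of $K$ for every $\lambda$, the center $b(\lambda)$ must be $0$, so $K=r(1)D$ is a centered ellipsoid. Then $\{f\ge s\}$ is a translate of a dilate of $D$, i.e.\ a translate of $\{f\ge s\}^\star$ pushed forward by the linear map taking $\B$ to $D$; a short argument (varying $s$ and using that the translation vector $a(s)$ must be consistent across levels, which follows from the additivity $c=a+b$ in Theorem~\ref{thm_burchard} applied with both $s$ and $t$ free) shows all superlevel sets share a common translate, whence $f$ is a translate of $f^\star$ composed with that fixed linear map. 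But since $K=r(1)D$ is a \emph{centered ellipsoid} and $D=\B$ after the identification, $f$ is literally a translate of $f^\star$. Conversely, if $K$ is a centered ellipsoid and $f$ is a translate of $f^\star$, both sides of \eqref{eq_riesz_equality} are equal by the change of variables induced by the ellipsoid's defining linear map together with translation invariance of the double integral.

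I expect the main obstacle to be the bookkeeping around the superlevel sets of the middle function having infinite measure, which is exactly the point flagged in the remark preceding the lemma. Theorems~\ref{thm_burchard} and~\ref{thm_burchard2} as stated require sets, respectively functions, of \emph{finite} positive measure, so one cannot apply them to $\chi_{\{\|\cdot\|_K>\lambda\}}$ directly; the device of writing $1-\chi_{\lambda K}$ and isolating the symmetrization-invariant constant $I_\infty$ is what circumvents this, and care is needed to justify the interchange of the $(s,t,\lambda)$-integrations and the subtraction under the finiteness hypothesis \eqref{eq_HLSr} — Tonelli applies to the two non-negative pieces separately once their difference is known to be integrable. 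A secondary technical point is verifying the transversality condition holds on a positive-measure set of parameters so that Theorem~\ref{thm_burchard} is actually applicable to deduce the ellipsoid structure; this uses the strict monotonicity and continuity of $\lambda\mapsto r(\lambda)$, which in turn rely on $0<|K|<\infty$. Everything else is a routine adaptation of the finite-measure argument underlying Lemma~\ref{lem_strict_rearrangement}.
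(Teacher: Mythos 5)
Your decomposition of the kernel $\|z\|_K^q = q\int_0^\infty \lambda^{q-1}\bigl(1-\chi_{\lambda K}(z)\bigr)\d\lambda$, the reduction to triples of indicators, and the appeal to Burchard's characterization (Theorem~\ref{thm_burchard}) after verifying transversality on a positive-measure set of parameters reproduce the paper's own argument; the paper merely parametrizes by $s=\lambda^q$ and writes the kernel layer as $\chi_{s^{1/q}(\R^n\setminus K)}$. One minor issue is a citation slip: the inequality $I(s,t,\lambda)\le I^\star(s,t,\lambda)$ for the indicator triples follows from the Riesz rearrangement inequality (Theorem~\ref{thm_BLL}), not from Theorem~\ref{thm_burchard2}, whose hypothesis that at least two of the Schwarz symmetrals be strictly decreasing is never satisfied by indicator functions.

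The substantive gap is the step ``since $K$ is star-shaped with respect to the origin and $\lambda K$ is a dilate of $K$ for every $\lambda$, the center $b(\lambda)$ must be $0$.'' Neither fact forces $b(\lambda)=0$: a translated centered ellipsoid containing the origin is convex, hence automatically star-shaped with respect to the origin, and the decomposition $\lambda K=\lambda b_0+\lambda r_0 D$ is perfectly compatible with $\lambda K$ being a dilate of $K$ for \emph{any} $b_0$. Moreover you do not address that the centered ellipsoid $D$ furnished by Theorem~\ref{thm_burchard} is a priori allowed to depend on the triple $(s,t,\lambda)$. Both points are handled in the paper by the single observation that $\lambda K = b(\lambda)+r(\lambda)D$ rewrites as $K=\lambda^{-1}b(\lambda)+(\vol{K}/\vol{D})^{1/n}D$; since $K$ is fixed, this determines the shape of $D$ independently of the parameters and shows $b(\lambda)=\lambda b_0$ for a fixed $b_0$. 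To conclude $b_0=0$ one then needs the additivity $c(t)=a(s)+b(\lambda)$: the vectors $a(s)$ and $c(t)$ do not depend on $\lambda$, so $b(\lambda)$ is constant in $\lambda$, which together with the linear scaling forces $b_0=0$. You invoke the additivity relation, but only afterward to align the $a(s)$ across levels; it is needed first, to establish $b=0$. With this correction the argument coincides with the paper's.
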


\goodbreak
\begin{proof}
    Writing 
    \begin{align}
    \|z\|_K^{q}
    &= \int_0^\infty \chi_{[s,\infty)}(\|z\|_K^q) \d s\\
    &= \int_0^\infty \chi_{[s^{1/q},\infty)}(\|z\|_K) \d s\\
    &= \int_0^\infty k_s(z) \d s
    \end{align} 
    where $k_s(z) = \chi_{s^{1/q}(\R^n\backslash K)}(z)$, and using the layer-cake formula \eqref{eq_layer_cake} for $f$,
    we obtain
    \begin{align}
	&\int_{\R^n}\int_{\R^n}{f(x) {\Vert x-y\Vert_K^q}  f(y)} \d x \d y \\
    &= \int_0^\infty \int_0^\infty\int_0^\infty \int_{\R^n}\int_{\R^n} \chi_{\{f\geq r\}}(x)  k_s(x-y) \chi_{\{f\geq t\}}(x) \d x \d y \d r \d s\d t.
	\end{align}
	The Riesz rearrangement inequality, Theorem~\ref{thm_BLL},  implies that
    \begin{align}
   \int_{\R^n}\int_{\R^n} \chi_{\{f\geq r\}}(x)  &k_s(x-y) \chi_{\{f \geq  t\}}(y) \d x \d y\\
	&= \int_{\R^n}\int_{\R^n} \chi_{\{f \geq r\}}(x) (1- \chi_{s^{1/q}K}(x-y)) \chi_{\{f \geq  t\}}(y) \d x \d y\\
	& =\int_{\R^n}\int_{\R^n} \chi_{\{f\geq  r\}}(x)\chi_{\{f\geq t\}}(y) \d x \d y 
	\\&\phantom{=\,}- \int_{\R^n}\int_{\R^n}  \chi_{\{f\geq  r\}}(x) \chi_{s^{1/q}K}(x-y) \chi_{\{f \geq  t\}}(y) \d x \d y\\
   & \ge
    \int_{\R^n}\int_{\R^n} \chi_{\{f\geq r\}^\star}(x)  k_s^\star(x-y) \chi_{\{f\geq  t\}^\star}(y) \d x \d y
    \end{align}
    for $ r, s,  t>0$. Note that $\int_{\R^n} \chi_{\{f\geq  r\}}(x) \d x <\infty$ for $r>0$, as $f\in L^p(\R^n)$. If there is equality in \eqref{eq_riesz_equality}, then there is a null set $M\subset(0,\infty)^3$ such that  
    \begin{equation}\label{eq_BLLp2}
    \begin{aligned}
    \int_{\R^n} \int_{\R^n}   &\chi_{\{f \ge r\}}(x)  \chi_{s^{ 1/q}K}(x-y) \chi_{\{f \ge  t\}}(y) \d x \d y  \\
    &=
    \int_{\R^n} \int_{\R^n}  \chi_{\{f \ge   r\}^\star}(x) \chi_{s^{ 1/q}K^\star}(x-y) \chi_{\{f\ge  t\}^\star}(y) \d x \d y
    \end{aligned}
    \end{equation}
for  $( r,s,  t)\in (0,\infty)^3\backslash M$.

\goodbreak    
    For almost every $( r,  t)\in(0,\infty)^2$, we have $(r,s, t)\in (0,\infty)^3\backslash M$ for almost every $s>0$.
    For  such $( r,  t)$ with $ r\ge  t$ and $s>0$ sufficiently small, the assumptions of Theorem \ref{thm_burchard} are fulfilled and therefore there are a centered ellipsoid $D$ and $a, b\in\R^n$ (depending on $( r, s, t)$)  such that
    \begin{align}
     \{f \ge  r\}= a + \alpha D,\quad s^{1/q} K=  b + \beta D,\quad \{f \ge  t\}= c + \gamma D
    \end{align}
    where $c= a+ b$. 
    Since $ K=  s^{-1/q}b + (\vol{K}/\vol{D})^{1/n} D$, the centered ellipsoid $D$ does not depend on $(r,s,t)$ and hence the vectors $a$ and $c$ do not depend on $s$. It follows that $b=0$ and  that $K$ is a multiple of $D$. Hence $a=c$ is a constant vector, which concludes the proof.
 \end{proof}

\goodbreak
\begin{lemma}\label{lem_reverse_support_Riesz}
Let  $\alpha>n$ and $K\subset\R^n$ be star-shaped with measurable radial function. If  $
f:\R^n\to[0,\infty)$ is non-zero and measurable
 and
\begin{equation}
\int_{\R^n}\int_{\R^n} \frac{f(x) f(y)}{\Vert x-y\Vert_K^{n-\alpha}} \d x \d y<\infty,
\end{equation}
then
\begin{equation}
\tilde V_\alpha(K, \hls\alpha f) \geq \tilde V_\alpha(K^\star, \hls\alpha f^\star).
\end{equation}
For $\vol{K}>0$, there is equality if and only if $K$ is a centered ellipsoid and $f$ is a translate of $f^\star$.
\end{lemma}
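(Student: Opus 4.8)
The plan is to reduce Lemma~\ref{lem_reverse_support_Riesz} to Lemma~\ref{lem_reverse_strict_rearrangement} by means of the identity \eqref{eq_new_def_c}, in exact parallel with the way Lemma~\ref{lem_smooth_compact_support_Riesz} was obtained from Lemma~\ref{lem_strict_rearrangement} in the range $0<\alpha<n$. Since now $\alpha>n$, the exponent $n-\alpha$ is negative, so with $q:=\alpha-n>0$ we have $\|x-y\|_K^{n-\alpha}=\|x-y\|_K^{-q}$, and hence the kernel $1/\|x-y\|_K^{n-\alpha}=\|x-y\|_K^{q}$ appearing in \eqref{eq_new_def_c} is precisely the increasing-type gauge power treated in Lemma~\ref{lem_reverse_strict_rearrangement}. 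Applying \eqref{eq_new_def_c} once to the pair $(K,f)$ and once to $(K^\star,f^\star)$ gives
\[
n\,\tilde V_\alpha(K,\hls\alpha f)=\int_{\R^n}\int_{\R^n} f(x)\,\|x-y\|_K^{q}\,f(y)\d x\d y
\]
and
\[
n\,\tilde V_\alpha(K^\star,\hls\alpha f^\star)=\int_{\R^n}\int_{\R^n} f^\star(x)\,\|x-y\|_{K^\star}^{q}\,f^\star(y)\d x\d y,
\]
and the hypothesis of the lemma guarantees that the first of these is finite.

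Next I would establish the inequality by the layer-cake/rearrangement argument already used in the proof of Lemma~\ref{lem_reverse_strict_rearrangement}. The subtlety is that the middle factor $\|z\|_K^{q}$ has superlevel sets of infinite measure, so Theorem~\ref{thm_BLL} cannot be applied to it directly; one instead writes $\|z\|_K^{q}=\int_0^\infty (1-\chi_{s^{1/q}K}(z))\d s$ and expands $f$ via the layer-cake formula \eqref{eq_layer_cake}, turning the double integral into a triple integral over $r,s,t>0$ of the quantities $\int_{\R^n}\int_{\R^n}\chi_{\{f\ge r\}}(x)(1-\chi_{s^{1/q}K}(x-y))\chi_{\{f\ge t\}}(y)\d x\d y$. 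Since $\{f\ge r\}$ and $\{f\ge t\}$ have finite measure, each such quantity splits as $\vol{\{f\ge r\}}\,\vol{\{f\ge t\}}$ minus $\int_{\R^n}\int_{\R^n}\chi_{\{f\ge r\}}(x)\chi_{s^{1/q}K}(x-y)\chi_{\{f\ge t\}}(y)\d x\d y$; the first term is invariant under Schwarz symmetrization, while the second can only increase under symmetrization by Theorem~\ref{thm_BLL}, so the whole quantity can only decrease. Integrating back in $r,s,t$ and dividing by $n$ yields $\tilde V_\alpha(K,\hls\alpha f)\ge\tilde V_\alpha(K^\star,\hls\alpha f^\star)$.

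For the equality statement, assume $\vol{K}>0$. By the two identities above, equality in $\tilde V_\alpha(K,\hls\alpha f)\ge\tilde V_\alpha(K^\star,\hls\alpha f^\star)$ is the same as equality in \eqref{eq_riesz_equality} for this value of $q$, which by Lemma~\ref{lem_reverse_strict_rearrangement} happens precisely when $K$ is a centered ellipsoid and $f$ is a translate of $f^\star$.

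The main work is bookkeeping of hypotheses rather than a new idea. Lemma~\ref{lem_reverse_strict_rearrangement} is stated for $f\in L^p(\R^n)$ and for $K$ with $0<\vol{K}<\infty$, whereas here $K$ is only star-shaped with measurable radial function and $f$ is only a non-negative measurable function; I would first dispose of the degenerate cases, noting that $\vol{K}=\infty$ makes the kernel vanish (so both sides are $0$) and that $\vol{K}=0$ is incompatible with finiteness of the left-hand side for non-zero $f$, and then run the argument above in the remaining case $0<\vol{K}<\infty$. The only point to verify there is that finiteness of $\vol{\{f\ge r\}}$ for every $r>0$ — which follows from the integrability hypothesis — is enough both to make the decomposition $\|z\|_K^{q}=\int_0^\infty(1-\chi_{s^{1/q}K}(z))\d s$ usable inside the double integral and to apply Theorem~\ref{thm_BLL} to the indicator functions, exactly as in the proof of Lemma~\ref{lem_reverse_strict_rearrangement}.
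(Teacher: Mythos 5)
Your proposal follows the paper's proof exactly: both reduce the claim to Lemma~\ref{lem_reverse_strict_rearrangement} via the identity \eqref{eq_new_def_c}, derive the inequality from the Riesz rearrangement argument packaged there, and read off the equality case from the same lemma. The extra bookkeeping you add is welcome (the paper's hypotheses on $f$ and $K$ are indeed weaker than those of Lemma~\ref{lem_reverse_strict_rearrangement}), but one remark is off: $\vol{K}=\infty$ does \emph{not} force the kernel $\|z\|_K^q$ to vanish, since $\rho_K$ can be finite a.e.\ while having a non-integrable singularity; what actually saves this case is that $K^\star=\R^n$, so $\rho_{K^\star}^{\,n-\alpha}\equiv 0$ (as $n-\alpha<0$) and the right-hand side $\tilde V_\alpha(K^\star,\hls\alpha f^\star)$ vanishes, making the inequality trivial.
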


\goodbreak
\begin{proof}
By \eqref{eq_new_def_c} and Lemma \ref{lem_reverse_strict_rearrangement}, we have
\[\tilde V_\alpha(K, \hls\alpha f) \geq \tilde V_\alpha(K^\star, \hls\alpha f^\star).\]
By Lemma \ref{lem_reverse_strict_rearrangement}, there is equality if and only if $K$ is a centered ellipsoid and $f$ is a translate of $f^\star$.
\end{proof}

\begin{lemma}\label{lem_new_rearrangement2}
Let $\alpha>n$ and $f:\R^n\to [0,\infty)$ measurable. If 
$\,\vol{\hls \alpha f}<\infty$, then
\begin{equation}
\vol{\hls \alpha f}\ge \vol{\hls \alpha f^\star}.
\end{equation}
There is equality if and only if $f$ is a translate of $f^\star$. 
\end{lemma}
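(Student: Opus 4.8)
The plan is to mimic the proof of Lemma~\ref{lem_new_rearrangement}, reversing every inequality: use Lemma~\ref{lem_reverse_support_Riesz} in place of Lemma~\ref{lem_smooth_compact_support_Riesz}, and the dual mixed volume inequality \eqref{eq_mixedvolumere} for $\alpha>n$ in place of \eqref{eq_mixedvolume}. Since $\vol{\hls\alpha f}<\infty$ is now part of the hypothesis, the truncation argument used in the second half of the proof of Lemma~\ref{lem_new_rearrangement} is not needed here, which actually makes the argument shorter.

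In detail, I would first dispose of the case $f=0$, which is trivial, and assume $f$ is non-zero. Then $\hls\alpha f$ has positive volume: by \eqref{eq_new_def_c} with $K=\B$ one has $\int_{\sn}\rho_{\hls\alpha f}(\xi)^\alpha\d\xi=\int_{\R^n}\int_{\R^n}\frac{f(x)f(y)}{\vert x-y\vert^{n-\alpha}}\d x\d y>0$, so $\rho_{\hls\alpha f}>0$ on a set of positive measure in $\sn$ and hence $0<\vol{\hls\alpha f}<\infty$. Next, apply \eqref{eq_new_def_c} with $K=\hls\alpha f$ to get
\[\int_{\R^n}\int_{\R^n}\frac{f(x)f(y)}{\Vert x-y\Vert_{\hls\alpha f}^{n-\alpha}}\d x\d y=n\,\tilde V_\alpha(\hls\alpha f,\hls\alpha f)=n\vol{\hls\alpha f}<\infty,\]
which is exactly the finiteness hypothesis that makes Lemma~\ref{lem_reverse_support_Riesz} applicable with this choice of $K$, yielding
\[\vol{\hls\alpha f}=\tilde V_\alpha(\hls\alpha f,\hls\alpha f)\ge\tilde V_\alpha\big((\hls\alpha f)^\star,\hls\alpha f^\star\big).\]
Combining this with \eqref{eq_mixedvolumere} and $\vol{(\hls\alpha f)^\star}=\vol{\hls\alpha f}$ gives
\[\vol{\hls\alpha f}\ge\vol{\hls\alpha f}^{\frac{n-\alpha}n}\vol{\hls\alpha f^\star}^{\frac\alpha n},\]
so in particular $\vol{\hls\alpha f^\star}<\infty$, and dividing by $\vol{\hls\alpha f}^{(n-\alpha)/n}\in(0,\infty)$ leaves $\vol{\hls\alpha f}^{\alpha/n}\ge\vol{\hls\alpha f^\star}^{\alpha/n}$, that is, $\vol{\hls\alpha f}\ge\vol{\hls\alpha f^\star}$.

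For the equality case, if $\vol{\hls\alpha f}=\vol{\hls\alpha f^\star}$ then the right-hand side of the penultimate display equals $\vol{\hls\alpha f}$, so there is equality throughout the chain, in particular in the inequality of Lemma~\ref{lem_reverse_support_Riesz}; since $\vol{\hls\alpha f}>0$, its equality characterization forces $f$ to be a translate of $f^\star$. Conversely, if $f$ is a translate of $f^\star$, the change of variables $x\mapsto x-v$ in \eqref{eq_defhls} shows $\rho_{\hls\alpha f}=\rho_{\hls\alpha f^\star}$, hence $\hls\alpha f=\hls\alpha f^\star$ and equality holds. I do not expect a serious obstacle here: essentially everything is assembling results already proven, and the only point deserving genuine care is the verification that Lemma~\ref{lem_reverse_support_Riesz} applies — namely that the choice $K=\hls\alpha f$ makes the relevant double integral finite, which is precisely where the standing hypothesis $\vol{\hls\alpha f}<\infty$ enters through \eqref{eq_new_def_c}. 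A secondary, smaller point is to record that $\vol{\hls\alpha f}>0$, so that the equality statement of Lemma~\ref{lem_reverse_support_Riesz} (which requires $\vol{K}>0$) is available and the division above is legitimate.
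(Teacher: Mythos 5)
Your proof is correct and follows essentially the same route as the paper's: apply Lemma~\ref{lem_reverse_support_Riesz} with $K=\hls\alpha f$ and then the dual mixed volume inequality \eqref{eq_mixedvolumere}, dividing at the end. You additionally verify the positivity of $\vol{\hls\alpha f}$ and the finiteness hypothesis of Lemma~\ref{lem_reverse_support_Riesz}, which the paper leaves implicit; this is harmless extra care, not a different argument.
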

\begin{proof} By Lemma \ref{lem_reverse_support_Riesz} with $K=\hls \alpha f$ and the dual mixed volume inequality \eqref{eq_mixedvolumere} for $\alpha>n$,  we have
\begin{align}
	\vol{\hls\alpha f}
	&= \tilde V_\alpha(\hls \alpha f, \hls\alpha f)\\ 
	&\geq \tilde V_\alpha((\hls\alpha f)^\star, \hls\alpha f^\star)\\
	&\geq \vol{(\hls\alpha f)^\star}^{1-\frac {\alpha}n} \vol{\hls\alpha f^\star}^{\frac \alpha n}\\
	&= \vol{\hls \alpha f}^{1-\frac {\alpha}n} \vol{\hls\alpha f^\star}^{\frac \alpha n}.
\end{align}
The equality case follows from Lemma \ref{lem_reverse_support_Riesz}. 
\end{proof}

We are now in the position to prove affine HLS inequalities for $\alpha>n$.

\begin{theorem}\label{thm_aHLS2}
For $\alpha>n$ and non-negative $f\in L^{{2n}/({n+\alpha})}(\R^n)$,
\begin{equation}
\gamma_{n,\alpha} \Vert f\Vert_{\frac{2n}{n+\alpha}}^2
\le
n \omega_n^{{1-\frac \alpha n}} 
\vol{\hls \alpha f}^{\frac \alpha n}
\\
\le
\int_{\R^n}\int_{\R^n} \frac{f(x) f(y)}{\vert x-y\vert^{n-\alpha}} \d x\d y.
\end{equation}
There is equality in the first inequality precisely if $f(x) = a (1 +|\phi (x-x_0)|^2)^{-(n-\alpha)/2}$ for $x\in\R^n$ with $a\ge 0$, $\phi\in\gln$ and $x_0\in\R^n$.
There is equality in the second inequality if $f$ is radially symmetric.
\end{theorem}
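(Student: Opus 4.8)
The plan is to run the proof of Theorem~\ref{thm_aHLS} with all inequalities reversed, using the reverse tools established in this section: Lemma~\ref{lem_new_rearrangement2} and Lemma~\ref{lem_reverse_support_Riesz} take the roles of Lemma~\ref{lem_new_rearrangement} and Lemma~\ref{lem_smooth_compact_support_Riesz}, the reverse dual mixed volume inequality \eqref{eq_mixedvolumere} replaces \eqref{eq_mixedvolume}, and the Dou--Zhu/Beckner inequality \eqref{eq_HLS2} replaces Lieb's \eqref{eq_HLS}. Throughout I may assume $f$ is not identically zero, since otherwise all three quantities vanish, and I recall that $\gamma_{n,\alpha}\Vert f\Vert_{\frac{2n}{n+\alpha}}^2$ is finite because $f\in L^{\,2n/(n+\alpha)}(\R^n)$.

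For the second inequality I set $K=\B$ in \eqref{eq_new_def_c} and apply \eqref{eq_mixedvolumere}:
\[
\int_{\R^n}\int_{\R^n}\frac{f(x)f(y)}{|x-y|^{n-\alpha}}\d x\d y
= n\,\tilde V_\alpha(\B,\hls\alpha f)
\ge n\,\omega_n^{\frac{n-\alpha}n}\vol{\hls\alpha f}^{\frac\alpha n}.
\]
Since \eqref{eq_mixedvolumere} holds for all star-shaped sets with no finiteness hypothesis, this also covers $\vol{\hls\alpha f}=\infty$. By the equality case of \eqref{eq_mixedvolumere} there is equality once $\hls\alpha f$ is a dilate of $\B$, i.e.\ a centered ball, which is so whenever $f$ is radially symmetric.

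For the first inequality I first dispose of the case $\vol{\hls\alpha f}=\infty$: there the middle term is $+\infty$ while the left side is finite, so the inequality is trivial. If $\vol{\hls\alpha f}<\infty$, then Lemma~\ref{lem_new_rearrangement2} applies and gives $\vol{\hls\alpha f^\star}\le\vol{\hls\alpha f}<\infty$, so $\hls\alpha f^\star$ is a centered ball; using $\Vert f\Vert_{\frac{2n}{n+\alpha}}=\Vert f^\star\Vert_{\frac{2n}{n+\alpha}}$, then \eqref{eq_HLS2} for the radially symmetric $f^\star$, then \eqref{eq_new_def_c} with $K=\B$, the equality case of the dual mixed volume inequality (as $\hls\alpha f^\star$ is a dilate of $\B$), and finally Lemma~\ref{lem_new_rearrangement2},
\begin{align*}
\gamma_{n,\alpha}\Vert f\Vert_{\frac{2n}{n+\alpha}}^2=\gamma_{n,\alpha}\Vert f^\star\Vert_{\frac{2n}{n+\alpha}}^2
&\le \int_{\R^n}\int_{\R^n}\frac{f^\star(x)f^\star(y)}{|x-y|^{n-\alpha}}\d x\d y\\
&= n\,\tilde V_\alpha(\B,\hls\alpha f^\star)
= n\,\omega_n^{\frac{n-\alpha}n}\vol{\hls\alpha f^\star}^{\frac\alpha n}
\le n\,\omega_n^{\frac{n-\alpha}n}\vol{\hls\alpha f}^{\frac\alpha n}.
\end{align*}
If equality holds, then $\vol{\hls\alpha f}<\infty$ and every step above is an equality. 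Equality in \eqref{eq_HLS2} forces $f^\star$ to be a classical extremizer $f^\star(x)=a(1+\lambda|x|^2)^{-(n-\alpha)/2}$, and equality in Lemma~\ref{lem_new_rearrangement2} — unwinding its proof through Lemma~\ref{lem_reverse_support_Riesz}, whose equality case rests on the ellipsoidal characterization in Burchard's Theorem~\ref{thm_burchard} — forces $\hls\alpha f$ to be a centered ellipsoid and $f$ to equal $x\mapsto f^\star(\phi^{-1}x-x_0)$ for some volume-preserving $\phi\in\gln$ and $x_0\in\R^n$; absorbing the scaling $\lambda$ into the linear map yields exactly the asserted family $f(x)=a(1+|\phi(x-x_0)|^2)^{-(n-\alpha)/2}$. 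Conversely, each such $f$ is a volume-preserving affine image of a classical extremizer, hence realizes equality in both steps by the affine invariance of \eqref{eq_ahls}.

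The delicate point is the equality analysis. Unlike the range $0<\alpha<n$, for $\alpha>n$ the classical extremizers (once $a>0$) have superlevel sets, double integral, and $\vol{\hls\alpha f}$ all infinite, so equality is attained only in the extended sense $\infty=\infty=\infty$; one must phrase the statement accordingly and reconcile the rearrangement argument — which nominally works with finite-measure data — with this, essentially by approximating through the truncations $f\chi_{k\B}$ as in the proof of Lemma~\ref{lem_new_rearrangement}. A secondary point is that Lemma~\ref{lem_new_rearrangement2} presupposes $\vol{\hls\alpha f}<\infty$, which is why the infinite case is handled separately and why it is essential that \eqref{eq_mixedvolumere} needs no finiteness hypothesis for the clean second inequality; and it is the ellipsoidal (rather than merely translational) form of Burchard's equality case that produces the full $\gln$-family of extremizers.
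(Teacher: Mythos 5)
Your argument reproduces the paper's proof of Theorem~\ref{thm_aHLS2} essentially step for step: the first inequality via the reverse HLS inequality \eqref{eq_HLS2} applied to $f^\star$, the identity \eqref{eq_new_def_c}, and Lemma~\ref{lem_new_rearrangement2}; the second via \eqref{eq_new_def_c} with $K=\B$ and the reverse dual mixed volume inequality \eqref{eq_mixedvolumere}; and the equality analysis via the equality cases of \eqref{eq_HLS2} and Lemma~\ref{lem_new_rearrangement2}. Your explicit handling of the case $\vol{\hls\alpha f}=\infty$ is a fair unpacking of the paper's terse ``we may assume that $\vol{\hls\alpha f}$ is finite.''

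Your final paragraph, however, rests on a misconception. You assert that for $\alpha>n$ the extremizers ``have superlevel sets, double integral, and $\vol{\hls\alpha f}$ all infinite,'' so that equality holds only as $\infty=\infty=\infty$, and you propose reconciling the rearrangement argument by truncation $f\chi_{k\B}$. This would apply to $f(x)=a(1+\lambda|x|^2)^{(\alpha-n)/2}$, the function displayed with exponent $-(n-\alpha)/2$ in the theorem statement; but that function is not even in $L^{2n/(n+\alpha)}(\R^n)$ when $\alpha>n$ (indeed $f^p\sim|x|^{2n(\alpha-n)/(n+\alpha)}$ grows at infinity), so the theorem does not apply to it and it cannot be an extremizer. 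The paper's own proof derives $f^\star(x)=a(1+\lambda|x|^2)^{-n/p}$, and since $n/p=(n+\alpha)/2$, the extremizer is $a(1+\lambda|x|^2)^{-(n+\alpha)/2}$; the exponent $-(n-\alpha)/2$ appearing in the statements of Theorem~\ref{thm_aHLS2} and after \eqref{eq_HLS2} is a misprint for $-(n+\alpha)/2$. For this decaying extremizer every superlevel set has finite measure, the double integral is finite, and (by the second inequality) $\vol{\hls\alpha f}$ is finite, so there is no degenerate $\infty=\infty=\infty$ case and no truncation argument is needed or used. You should have noticed the discrepancy, since the exponent $-n/p$ appears explicitly in the very proof you were reconstructing.
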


\begin{proof}
For the first inequality, we may assume that $\vol{\hls \alpha f}$ is finite. Since $\|f\|_p=\|f^\star\|_p$, we obtain by the HLS inequality \eqref{eq_HLS2}, by \eqref{eq_new_def_c} and by Lemma \ref{lem_new_rearrangement2} that
\begin{align}
	\gamma_{n,\alpha} \|f\|_p^2 
	&\leq \int_{\R^n}\int_{\R^n} \frac{f^\star(x)f^\star(y)}{|x-y|^{n-\alpha}} \d x \d y \\
	&= n \tilde V_\alpha (\B, \hls \alpha f^\star)\\
	&= n \omega_n^{{1-\frac \alpha n}} \vol{\hls\alpha f^\star}^{\frac\alpha n}\\
	&\leq n \omega_n^{{1-\frac \alpha n}} \vol{\hls\alpha f}^{\frac\alpha n}.
\end{align}
If there is equality throughout, then $f^\star$ realizes equality in the HLS inequality \eqref{eq_HLS2}. Hence $f^\star(x) = a (1 + \lambda\, | x|^2)^{-n/p}$ for some $a\ge0$ and $\lambda>0$.
By Lemma~\ref{lem_new_rearrangement2}, we obtain the equality case.

\goodbreak
For the second inequality, assume that
\[\int_{\R^n}\int_{\R^n} \frac{f(x)f(y)}{\vert x-y\vert^{n-\alpha}}\d x\d y<\infty.\]
We set $K=\B$ in \eqref{eq_new_def_c} and apply the dual mixed volume inequality \eqref{eq_mixedvolumere} to obtain
\begin{align}
    \int_{\R^n}\int_{\R^n} \frac{f(x)f(y)}{\vert x-y\vert^{n-\alpha}}\d x\d y
    &= n \tilde V_{\alpha}(\B, \hls \alpha f)
    \geq n \omega_n^{1-\frac{\alpha} n} \vol{\hls \alpha f}^{\frac\alpha n}.
\end{align}
There is equality precisely if $\hls \alpha f$ is a ball, which is the case for radially symmetric functions.
\end{proof}

Next, we state a sharp reverse of the first inequality from Theorem \ref{thm_aHLS2} for log-concave functions.

\begin{theorem}\label{thm_reverseineq2}
For $\,\alpha>n$ and  log-concave $f\in L^2(\R^n)$, 
\[ \frac{\Gamma(n+1)^\frac{\alpha}n}{\Gamma(\alpha)}\,\vol{\hls\alpha f}^\frac{\alpha}n \leq  \|f\|_2^{2-\frac{2\alpha}n} \|f\|_1^{\frac{2\alpha}n} \leq \Vert f\Vert_{\frac{2n}{n+\alpha}}^2.\]
There is equality in the first inequality if $f(x)=a\,e^{-\Vert x - x_0 \Vert_\Delta}$ for $x\in\R^n$ with $a\ge 0, x_0 \in \R^n$ and $\Delta$ an $n$-dimensional simplex having a vertex at the origin.
\end{theorem}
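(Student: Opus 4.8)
The plan is to handle the two inequalities separately. The second one, $\|f\|_2^{2-2\alpha/n}\|f\|_1^{2\alpha/n}\le\Vert f\Vert_{2n/(n+\alpha)}^2$, follows from H\"older's inequality exactly as the corresponding statement in Theorem \ref{thm_reverseineq}: writing $p=2n/(n+\alpha)$ (now $p<1$), interpolation gives $\|f\|_1\le\|f\|_p^{n/\alpha}\|f\|_2^{(\alpha-n)/\alpha}$, which rearranges to the claim. For the first inequality I would pass to the autocorrelation $g:=f*\tilde f$, with $\tilde f(x)=f(-x)$, so that $g(y)=\int_{\R^n}f(x)f(x+y)\d x$ and, by \eqref{eq_defhls}, $\rho_{\hls\alpha f}(\xi)^\alpha=\int_0^\infty t^{\alpha-1}g(t\xi)\d t$. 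A log-concave function in $L^2(\R^n)$ is also in $L^1(\R^n)$, so by Lemma \ref{lem_GG} (case $s=0$) the function $g$ is log-concave; moreover $g$ is even, $g(0)=\|f\|_2^2=\sup g$ by the Cauchy--Schwarz inequality, $\int_{\R^n}g=\|f\|_1^2$, and $\hls\alpha f$ is a convex body by Proposition \ref{prop_convex}, so $\vol{\hls\alpha f}<\infty$. Consequently, for each $\xi\in\sn$ the function $h_\xi(t):=g(t\xi)$ on $[0,\infty)$ is non-increasing, log-concave, and has $h_\xi(0)=\|f\|_2^2$.

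The technical core is a one-dimensional moment inequality: if $h:[0,\infty)\to[0,\infty)$ is non-increasing and log-concave with $h(0)=a>0$ and $\alpha>n$, then
\[
\Big(\int_0^\infty t^{\alpha-1}h(t)\d t\Big)^{n/\alpha}\le\frac{\Gamma(\alpha)^{n/\alpha}}{\Gamma(n)}\,a^{\,n/\alpha-1}\int_0^\infty t^{n-1}h(t)\d t ,
\]
with equality if and only if $h(t)=a\,e^{-\lambda t}$ for some $\lambda>0$. I would prove this by comparing $h$ with the exponential $h_0(t)=a\,e^{-\lambda t}$, choosing $\lambda$ so that $h$ and $h_0$ have the same $t^{n-1}$-moment. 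Since $\log(h/h_0)$ is concave and vanishes at $0$, either $h\le h_0$ everywhere (and then $h=h_0$ by the normalization), or there is $s_*\in(0,\infty]$ with $h\ge h_0$ on $[0,s_*]$ and $h\le h_0$ on $[s_*,\infty)$; in that case $t^{\alpha-1}-s_*^{\alpha-n}\,t^{n-1}=t^{n-1}(t^{\alpha-n}-s_*^{\alpha-n})$ is nonpositive on $[0,s_*]$ and nonnegative afterwards, so its product with $h-h_0$ is nonpositive throughout, and integrating (the $s_*^{\alpha-n}t^{n-1}$-part integrates to zero) yields $\int_0^\infty t^{\alpha-1}h\le\int_0^\infty t^{\alpha-1}h_0$. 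Evaluating the moments of $h_0$ by the Gamma function and eliminating $\lambda$ gives the displayed bound; equality forces $h=h_0$. (For $0<\alpha<n$ the monomial $t^{\alpha-n}$ is decreasing, the inequality reverses, and this is exactly the mechanism behind Theorem \ref{thm_reverseineq}.)

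Applying this with $h=h_\xi$ and $a=\|f\|_2^2$, raising to the power $n$, and integrating over $\sn$ gives
\[
n\vol{\hls\alpha f}=\int_{\sn}\rho_{\hls\alpha f}(\xi)^n\d\xi\le\frac{\Gamma(\alpha)^{n/\alpha}}{\Gamma(n)}\,\|f\|_2^{2n/\alpha-2}\int_{\sn}\int_0^\infty t^{n-1}g(t\xi)\d t\,\d\xi=\frac{\Gamma(\alpha)^{n/\alpha}}{\Gamma(n)}\,\|f\|_2^{2n/\alpha-2}\,\|f\|_1^2 ,
\]
where the last equality is the computation in \eqref{eq_hlsn}. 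Raising to the power $\alpha/n$ and using $n\,\Gamma(n)=\Gamma(n+1)$ gives the first inequality. Equality throughout forces equality in the one-dimensional inequality for almost every $\xi$, i.e.\ $t\mapsto g(t\xi)$ must be exponential for a.e.\ $\xi$. To see that $f(x)=a\,e^{-\Vert x-x_0\Vert_\Delta}$ achieves equality, I would use that both sides of the first inequality are invariant under translations and under volume-preserving linear maps (and transform compatibly under a general $\phi\in\gln$ and under scalar multiplication), so it suffices to take $x_0=0$ and $\Delta$ the standard simplex; then, in those coordinates, $f$ is the product over $i$ of the one-dimensional functions equal to $a^{1/n}e^{-x_i}$ for $x_i\ge0$ and $0$ otherwise, hence $g(y)=a^2\prod_{i=1}^n\tfrac12 e^{-|y_i|}$, and $t\mapsto g(t\xi)=2^{-n}a^2 e^{-t(|\xi_1|+\dots+|\xi_n|)}$ is indeed exponential for every $\xi=(\xi_1,\dots,\xi_n)$.

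The step I expect to be the main obstacle is the one-dimensional moment inequality --- in particular, the sign-change analysis of $h-h_0$ and identifying its equality case --- together with the (routine but indispensable) observation that the conjectured extremizer, coming from a simplex with a vertex at the origin, factors into one-dimensional exponentials, so that its autocorrelation is radially exponential.
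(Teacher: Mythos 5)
Your proposal is correct and follows the same underlying mechanism as the paper: reduce to a one-dimensional moment inequality for the autocorrelation $g=\auto f$ along each direction $\xi$, comparing the moments of order $\alpha-1$ and $n-1$ of the non-increasing log-concave profile $t\mapsto g(t\xi)$ against those of an exponential. The paper packages this as the inclusion Theorem~\ref{thm_inclusion} for the radial mean bodies $\omr\alpha f$, which is obtained from Lemma~\ref{lem_extendedMilmanPajor} (an extension of Lemma~\ref{lem_MilmanPajor}, itself imported from Milman--Pajor/Marshall--Olkin--Proschan); Theorem~\ref{thm_reverseineq2} then falls out via \eqref{eq_omr} and \eqref{eq_omrn}. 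You instead bypass the $\omr\alpha f$ normalization and, rather than citing the monotonicity lemma, supply a direct self-contained proof of exactly the specialization you need (that $\zeta(\alpha)\le\zeta(n)$ with $\omega(t)=e^{-t}$) via the sign-change/Chebyshev argument for $h-h_0$, where $h_0$ is the exponential matched to have the same $t^{n-1}$-moment. That sign-change argument is correct, and is precisely the classical mechanism behind the cited lemma, so the content is the same; what you lose by specializing is that the paper's Lemma~\ref{lem_extendedMilmanPajor} also covers the range $\alpha\in(-1,0]$, which is needed for the reverse fractional Sobolev inequalities (Theorems~\ref{thm_reverseineq3} and Corollary~\ref{cor_reverseineq2}) but not for the present theorem. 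Your verification of the equality case via the factorization of $e^{-\|x\|_{\Delta_n}}$ into one-dimensional exponentials matches Lemma~\ref{lem_calc} in the paper, and your reduction of the second inequality to log-convexity of $p\mapsto\|f\|_{1/p}$ is the same H\"older step the paper uses.
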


\noindent
The proof of this result will be given in Section \ref{sec_reverse}.

\goodbreak
\section{Radial Mean Bodies}\label{sec_GZ}

Let $E\subset \R^n$ be a convex body. For $\alpha>-1$ and $\alpha\ne 0$, Gardner and Zhang \cite{GZ} defined the radial $\alpha$-th mean body of $E$, by its radial function for $\xi\in\sn$, as
\begin{equation}\label{def_ralpha}
\rho_{\omr \alpha E}(\xi)^\alpha = \frac 1{\vol{E}}\int_E \rho_{E-x}(\xi)^\alpha \d x
\end{equation}
for $\alpha\ne0$ and as
\[
\log(\rho_{\omr 0 E}(\xi)) = \frac 1{\vol{E}}\int_E \log(\rho_{E-x}(\xi)) \d x.
\]
They showed that $\omr p E$ is a star body for $\alpha>-1$ and a convex body for $\alpha\ge0$. 
This also follows from Proposition \ref{prop_convex} and equation \eqref{eq_gz_conn}, which we will establish below. Gardner and Zhang \cite{GZ} also showed that for $\alpha>-1$ and $\xi\in\sn$,
\begin{equation}
\rho_{\omr \alpha E}(\xi)^\alpha =\frac1{(\alpha+1)\vol{E}}\int_{E\vert \xi^\perp} \vol{E\cap (\ell_\xi+y)}_1^{\alpha+1}\d y,
\end{equation}
where $\ell_\xi=\{t\xi: t\in\R\}$ is the line in direction $\xi$ and $\vol{\cdot}_1$ denotes one-dimensional volume while $E\vert \xi^\perp$ is the image of the orthogonal projection of $E$ to the hyperplane orthogonal to $\xi$.

If $\alpha > 0$, then the definition of $\hls \alpha \chi_E$ implies that 
\begin{align}
	\rho_{\hls \alpha \chi_E}(\xi)^\alpha
	&= \int_0^\infty t^{\alpha -1} \vol{E \cap (E+t\xi)}_1 \d t\\
	&= \int_0^\infty t^{\alpha -1} \int_{E\vert \xi^\perp} \pl{(\vol{E\cap(\ell_\xi+y)}_1-t)} \d y\d t\\
	&=\int_{E\vert \xi^\perp}  \int_0^\infty t^{\alpha -1} \pl{(\vol{E\cap(\ell_\xi+y)}_1-t)} \d t \d y\\
	&=  \int_{E\vert \xi^\perp} \int_0^{\vol{E\cap(\ell_\xi+y)}_1} t^{\alpha -1} (\vol{E\cap(\ell_\xi+y)}_1-t) \d t \d y\\
	&= \frac 1{\alpha(\alpha+1)} \int_{E\vert \xi^\perp}\vol{E\cap(\ell_\xi+y)}_1^{\alpha+1} \d y.
\end{align}
Hence, 
\begin{equation}\label{eq_gz_conn}
\hls \alpha \chi_E = \Big(\frac{\vol{E}}{\alpha}\Big)^{1/\alpha} \omr \alpha E
\end{equation}
for $\alpha>0$.
If $-1<\alpha< 0$, then, using \eqref{eq_defpp}, we obtain that
\begin{align}
	\rho_{\Pi_2^{\ast,-\alpha/2} \chi_E}(\xi)^\alpha
	&= \int_0^\infty t^{\alpha -1} \vol{E \Delta (E+t\xi)} \d t\\
	&= \int_0^\infty t^{\alpha -1} \int_{E\vert \xi^\perp} 2 \min\{\vol{E\cap(\ell_\xi+y)}_1,t\} \d y\d t\\
	&=2\int_{E\vert \xi^\perp}  \int_0^\infty t^{\alpha -1} \min\{\vol{E\cap(\ell_\xi+y)}_1,t\} \d t \d y\\
	&=  2\int_{E\vert \xi^\perp} \int_0^{\vol{E\cap(\ell_\xi+y)}_1} t^\alpha \d t + \int_{\vol{E\cap(\ell_\xi+y)}_1}^\infty \vol{E\cap(\ell_\xi+y)}_1\, t^{\alpha-1} \d t \d y\\
	&= -\frac 2{\alpha(\alpha+1)} \int_{E\vert \xi^\perp}\vol{E\cap(\ell_\xi+y)}_1^{\alpha+1} \d y,
\end{align}
where $E\Delta F$ is the symmetric difference of $E,F\subset\R^n$. Hence, we obtain that
\begin{equation}\label{eq_gz_conn_2}
\Pi_2^{\ast,-\alpha/2}  \chi_E = \Big(\frac{2\vol{E}}{-\alpha}\Big)^{1/\alpha} \omr \alpha E
\end{equation}
for $-1<\alpha< 0$.

\goodbreak
See \cite{GZ,HaddadLudwig_fracsob} for information on sharp affine isoperimetric inequalities for radial mean bodies.

\goodbreak
\section{Reverse Affine HLS and Fractional $L^2$ Sobolev Inequalities}\label{sec_reverse}

We prove Theorem \ref{thm_reverseineq} and Theorem \ref{thm_reverseineq2} for log-concave functions and derive results 
for $s$-concave functions for $s> 0$. In addition, we establish reverse affine fractional $L^2$ Sobolev inequalities.

\subsection{An auxiliary result}

Let $\omega\colon[0,\infty) \to [0,\infty)$ be decreasing with 
$$0<\int_0^{\infty } t^{\alpha-1} \omega (t ) \d t<\infty$$ 
for every $\alpha>0$. For any $t_0 > 0$ and $\alpha>0$, we have
\begin{equation}
    \label{eq_analyticcontinuation}
    \int_0^{\infty } t^{\alpha-1} \omega(t) \d t=\int_{ t_0}^{\infty } t^{\alpha-1} \omega(t) \d t-\int_0^{ t_0} t^{\alpha-1} (\omega(0)-\omega(t)) \d t+ \omega(0) \frac{ t_0^\alpha}{\alpha}.
\end{equation}
If, in addition, 
\begin{equation}
    \label{eq_growthcondition_negative}
    \int_0^\infty t^{\alpha-1} (\omega(0) - \omega(t))\d t<\infty
\end{equation}
for every $\alpha \in (-1,0)$, then the right side of \eqref{eq_analyticcontinuation} is finite for $\alpha \in (-1,0)$. Moreover, it is equal to
\[\int_0^\infty t^{\alpha-1} (\omega(t)-\omega(0)) \d t,\]
which is the analytic continuation of $\alpha \mapsto \int_0^\infty t^{\alpha-1} \omega(t) \d t$ to $(-1,0)$ (see, for example, \cite[Section 1.3]{GelfandShilov}).

For $\omega(t)=e^{-t}$ and $\omega(t) = \pl{(1-s t)}^{1/s}$ with $s>0$, we obtain the well-known analytic continuation formulas for the gamma and beta functions, 
\begin{equation}
\label{eq_gammacontinuation}
\Gamma(\alpha) =  
\begin{cases}
 \int_0^{\infty } t^{\alpha-1} e^{-t} \d t &\text{for } 0<\alpha, \\[4pt]
 \int_0^{\infty } t^{\alpha-1}  (e^{-t}-1) \d t &\text{for } -1<\alpha<0,
\end{cases}
\end{equation}
and
\begin{equation}
\label{eq_betacontinuation}
s^{-\alpha} \Beta(\alpha,1+\frac 1s) = 
\begin{cases}
 \int_0^{\infty } t^{\alpha-1} \pl{(1-s t)}^{1/s} \d t &\text{for } 0<\alpha, \\[4pt]
 \int_0^{\infty } t^{\alpha-1} (\pl{(1-s t)}^{1/s}-1) \d t &\text{for } -1<\alpha<0.
\end{cases}
\end{equation}

We require the following result. It is a generalization (from $\alpha>0$ to $\alpha>-1$) of \cite[Lemma~2.6]{Milman:Pajor}, which is, according to \cite{Milman:Pajor}, a consequence of results from \cite{MOP1967}. We include the proof of the case $\alpha>0$ for the convenience of the reader. Special cases of the following lemma were obtained by Koldobsky, Pajor, and Yaskin \cite[Proof of Lemma 3.3]{KoldobskyPajorYaskin} and Fradelizi, Li, and Madiman \cite[Theorem~6.1]{FradeliziLiMadiman}.

\begin{lemma}
\label{lem_extendedMilmanPajor}
Let $\omega\colon[0,\infty) \to [0,\infty)$ be decreasing with 
$$0<\int_0^{\infty } t^{\alpha-1} \omega (t ) \d t<\infty$$ 
for every $\alpha>0$ and
$$0<\int_0^{\infty } t^{\alpha-1} (\omega (0) - \omega(t)) \d t<\infty$$ 
for every $-1<\alpha<0$.
If $\varphi\colon[0,\infty) \to [0,\infty)$ is non-zero, with $\varphi(0) = 0$, and such that $t\mapsto\varphi(t)$ and $t\mapsto\varphi(t)/t$ are increasing on $(0,\infty)$, 
then
\begin{equation}
\label{eq_zeta}
\zeta(\alpha)=
\begin{cases}
\displaystyle\left(\frac{\int_0^{\infty } t^{\alpha-1} \omega (\varphi(t) ) \d t}{\int_0^{\infty } t^{\alpha-1} \omega (t ) \d t}\right)^{\frac{1}{\alpha}}&\text{for } \alpha>0\\[12pt]
\displaystyle\exp \Big(\int_0^{\infty } \frac{\omega (\varphi(t) ) - \omega(t) }{t \,\omega(0)} \d t\Big)&\text{for } \alpha=0\\[8pt]
\displaystyle \left(\frac{\int_0^{\infty } t^{\alpha-1} (\omega(\varphi(t)) - \omega(0) ) \d t}{\int_0^{\infty } t^{\alpha-1} (\omega(t) - \omega(0)) \d t}\right)^{\frac{1}{\alpha}}&\text{for } -1<\alpha<0
\end{cases}
\end{equation}
is a continuous, decreasing function of $\alpha$ on $(-1, \infty)$. Moreover, $\zeta$ is constant  on $(-1, \infty)$ if $\varphi(t) = \lambda t$ on $[0,\infty)$ for some $\lambda>0$.
\end{lemma}
\begin{proof}
First, we show that $\zeta$ is well-defined.
Take $t_0 > 0$ such that $\varphi(t_0) > 0$, and set $v_0 = \varphi(t_0)/{t_0}$. Since $t\mapsto\varphi(t)/t$ is increasing, we have $\varphi(t) \leq v_0 t$ for $0 \leq t \leq t_0$ and $\varphi(t) \geq v_0 t$ for $t \geq t_0$.

For $\alpha \in (0, \infty)$, we have
\begin{align}
\begin{split}
\label{eq_extendedMilmanPajor_welldef_1}
v_0 \int_{t_0}^\infty t^{\alpha-1} \omega(\varphi(t)) \d t
&\leq v_0 \int_{t_0}^\infty t^{\alpha-1} \omega(v_0 t) \d t\\
&= \int_{v_0}^\infty \left(\frac t{v_0}\right)^{\alpha-1} \omega(t) \d t,
\end{split}
\end{align}
where the last integral is finite and increasing with respect to $\alpha$, and for $\alpha \in (-1,0)$,
\begin{align}
\begin{split}
\label{eq_extendedMilmanPajor_welldef_2}
v_0 \int_0^{t_0} t^{\alpha-1} (\omega(0) - \omega(\varphi(t))) \d t
&\leq v_0 \int_0^{t_0} t^{\alpha-1} (\omega(0) - \omega(v_0 t)) \d t\\
&= \int_0^{v_0} \left(\frac t{v_0}\right)^{\alpha-1} (\omega(0) - \omega(t)) \d t,
\end{split}
\end{align}
where the last integral is finite and decreasing with respect to $\alpha$.
Now \eqref{eq_extendedMilmanPajor_welldef_1}, \eqref{eq_extendedMilmanPajor_welldef_2}, and \eqref{eq_analyticcontinuation} together with the dominated convergence theorem show that $\zeta$ is well-defined and continuous on $(-1, 0)$ and $(0, \infty)$. 

For $\alpha = 0$, we have
\begin{align}
\begin{split}
\label{eq_extendedMilmanPajor_welldef_3}
\int_0^{t_0} &\Big|\frac{\omega(\varphi(t)) - \omega(t)}{t}\Big| \d t\\
&\leq \int_0^{t_0} t^{-1} (\omega(0) - \omega(\varphi(t))) \d t + \int_0^{t_0} t^{-1} (\omega(0) - \omega(t)) \d t \\
&\leq \int_0^{v_0} t^{-1} (\omega(0) - \omega(t)) \d t + \int_0^{t_0} t^{-1} (\omega(0) - \omega(t)) \d t
\end{split}
\end{align}
and
\begin{align}
\begin{split}
\label{eq_extendedMilmanPajor_welldef_4}
\int_{t_0}^\infty \left|\frac{\omega(\varphi(t))- \omega(t)}{t}\right| \d t
&\leq \int_{v_0}^\infty t^{-1} \omega(t) \d t + \int_{t_0}^\infty t^{-1} \omega(t) \d t.
\end{split}
\end{align}
The monotonicity of the last integrals of \eqref{eq_extendedMilmanPajor_welldef_1} and \eqref{eq_extendedMilmanPajor_welldef_2} shows that the integrals in \eqref{eq_extendedMilmanPajor_welldef_3} and \eqref{eq_extendedMilmanPajor_welldef_4} are finite. Hence $\zeta(0)$ is well-defined.

Next, we show that 
$\zeta$ is decreasing on $(0,\infty)$. The argument is taken from \cite[Lemma~2.6]{Milman:Pajor}.
Let $\alpha>0$. It follows from the definition of $\zeta(\alpha)$ that
\begin{equation}\label{eq_equality}
    \int_0^\infty t^{\alpha-1} \omega\big(\frac t{\zeta(\alpha)}\big)\d t = \int_0^\infty t^{\alpha-1} \omega(\varphi(t))\d t.
\end{equation}
Set
\begin{equation}
    \eta(s)=\int_s^\infty t^{\alpha-1} \big(\omega\big(\frac t{\zeta(\alpha)}\big)-\omega(\varphi(t))\big)\d t,
\end{equation}
and note that \eqref{eq_equality} implies that $\eta(0)=0$ and $\lim_{s\to\infty} \eta(s)=0$. It is clear that $\omega\big(\frac t{\zeta(\alpha)}\big)-\omega(\varphi(t))$ cannot be always positive or always negative. Since $\omega$ is decreasing and $t\mapsto \varphi(t)/t$ is increasing, the function 
$t\mapsto \omega(t/ \zeta(\alpha))- \omega(t( \varphi(t)/t))$
is first non-positive and then non-negative. Hence, $\eta(s)\ge 0$ for $s\ge 0$, that is,
\begin{equation}
\int_s^\infty t^{\alpha-1} \omega\big(\frac t{\zeta(\alpha)}\big)\d t \ge \int_s^\infty t^{\alpha-1} \omega(\varphi(t))\d t.
\end{equation}
Integrating this inequality for $0<\alpha< \beta$ gives by Fubini's theorem that
\begin{equation}\label{eq_increasing}
    \begin{split}
        \int_0^\infty t^{\beta-1} \omega(\varphi(t)) \d t
        &= \int_0^\infty \int_0^t (\beta -\alpha) s^{\beta-\alpha-1} \d s \ 
        t^{\alpha-1}\omega(\varphi(t))\d t\\
        &= (\beta -\alpha) \int_0^\infty s^{\beta-\alpha-1}\Big(\int_s^\infty t^{\alpha-1}\omega(\varphi(t))\d t\Big)\d s \\
        &\le (\beta-\alpha) \int_0^\infty s^{\beta-\alpha-1} \Big( \int_s^\infty t^{\alpha-1} \omega\big(\frac t{\zeta(\alpha)}\big)\d t\Big) \d s\\
        &\le \int_0^\infty t^{\beta-1} \omega\big(\frac t{\zeta(\alpha)}\big) \d t\\
        &= \zeta(\alpha)^{-\beta} \int_0^\infty t^{\beta-1} \omega(t)\d t,
     \end{split}
\end{equation}
which concludes the proof that $\zeta$ is increasing on $(0, \infty)$.

Next, we show that $\zeta$ is decreasing on $(-1,0)$.
By the change of variables $r = t^{-1}$, we get
\[
\zeta(\alpha)^{-1}=\left(\frac{\int_0^{\infty} r^{-\alpha-1} (\omega(0)-\omega(\psi (r)^{-1})) \d r}{\int_0^{\infty }  r^{-\alpha-1} (\omega(0)-\omega(r^{-1}))  \d r}\right)^{-\frac{1}{\alpha}}
\]
where $\psi(r)=\varphi(r^{-1})^{-1}$. Observe that $\omega(0)-\omega(r^{-1})$ is decreasing and non-negative, and that ${\psi(r)}/r = \left({\varphi(r^{-1})}/{r^{-1}}\right)^{-1}$ is increasing.
Since $-\alpha \in (0,1)$ and $\zeta$ is increasing on $(0,\infty)$ by \eqref{eq_increasing}, we obtain that $\zeta(\alpha)^{-1}$ is an increasing function of $\alpha$.

It only remains to show that $\zeta$ is continuous at $\alpha = 0$. Using \eqref{eq_analyticcontinuation} and the elementary relation,
\[
\lim_{\alpha\to 0^{\pm }} \left(\frac{a(\alpha)+\frac{c}{\alpha}}{b(\alpha)+\frac{c}{\alpha}}\right)^{\frac 1{\alpha}} =
\exp\left(\frac{a(0) - b(0)}{c}\right),
\]
which is valid for a constant $c\ne0$ and continuous functions $\alpha\mapsto a(\alpha)$ and $\alpha\mapsto b(\alpha)$, we obtain that
\begin{equation*}
    \label{eq_limitG}
\lim_{\alpha\to 0^{\pm }} \zeta(\alpha)=\exp \left(\int_0^{\infty } \frac{\omega (\varphi(t) ) - \omega(t) }{t \,\omega(0)} \d t\right).
\end{equation*}
So $\zeta$ is continuous at $\alpha=0$.
\end{proof}

\subsection{Radial mean bodies of functions}
Let $f\in L^2(\R^n)$ be non-zero and non-negative. 
We define 
\begin{equation}\label{eq_omr}
\omr \alpha f =
   \left(\frac{ \alpha}{\Vert f\Vert_2^2}\right)^\frac1\alpha \hls \alpha f
\end{equation}
for $\alpha>0$ and 
\begin{equation}\label{eq_omr2}
\omr \alpha f =
   \left(\frac{\vert \alpha\vert}{2\Vert f\Vert_2^2}\right)^\frac1\alpha \Pi_2^{\ast,-\alpha/2} f
\end{equation}
for $-1<\alpha<0$. 
In addition, we define $\omr0 f$ by its radial function for $\xi\in\sn$ as
\begin{equation}
\log(\rho_{\omr 0 f}(\xi))=  -\gamma + \int_0^\infty \frac 1t \left( \frac 1{\|f\|_2^2} \int_{\R^n} f(x) f(x+t \xi) \d x - e^{-t} \right) \d t,
\end{equation}
where $\gamma$ is Euler's constant. The definitions \eqref{eq_omr} and \eqref{eq_omr2} are compatible with \eqref{eq_gz_conn} and \eqref{eq_gz_conn_2} for $f=\chi_E$ and a convex body $E\subset\R^n$. Note that
\eqref{eq_hlsn} implies that
\begin{equation}\label{eq_omrn}
    \vol{\omr n f} =\frac{\Vert f\Vert_1^2}{\Vert f\Vert_2^2}
\end{equation}
for non-zero $f\in L^2(\R^n)$.

\subsection{Results for log-concave functions} 
We start with a simple calculation, for which we need the following notation. 
We define the simplex $\Delta_n\subset \R^n$ as the convex hull of the origin and  the standard basis vectors $e_1, \dots, e_n$. In addition, let
$B_1^n=\{(x_1, \dots, x_n): \vert x_1\vert + \dots +\vert x_n\vert \le 1\}$
and $\R^n_+=\{(x_1, \dots,x_n)\in\R^n: x_1, \dots, x_n\ge 0\}$.

\begin{lemma} \label{lem_calc}
If $f(x)=e^{-\Vert x\Vert_{\Delta_n}}$ for $x\in\R^n$, then 
\[
    f*f^-(y)= \frac1{2^n}e^{- \Vert y\Vert_{B_1^n}}
\]
for $y\in\R^n$.
\end{lemma}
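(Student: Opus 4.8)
The plan is to exploit the product structure of $f$. First I would write the two gauge functions explicitly: since $\Delta_n=\{x\in\R^n: x_i\ge 0,\ x_1+\dots+x_n\le1\}$, one has $\Vert x\Vert_{\Delta_n}=x_1+\dots+x_n$ when $x\in\R^n_+$ and $\Vert x\Vert_{\Delta_n}=+\infty$ otherwise, while $\Vert y\Vert_{B_1^n}=\vert y_1\vert+\dots+\vert y_n\vert$. Consequently $f$ is a tensor product, $f(x)=\prod_{i=1}^n g(x_i)$, where $g:\R\to[0,\infty)$ is given by $g(t)=e^{-t}$ for $t\ge0$ and $g(t)=0$ for $t<0$.

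Next, because $f$ factors and all integrands are non-negative, Tonelli's theorem lets me factor the autocorrelation:
\[
\auto f(y)=\int_{\R^n} f(x)\,f(x+y)\d x=\prod_{i=1}^n h(y_i),\qquad h(s)=\int_{\R} g(t)\,g(t+s)\d t.
\]
Then I would compute $h$ directly. The integrand $g(t)\,g(t+s)$ is supported on $\{t\ge\max(0,-s)\}$, where it equals $e^{-2t-s}$. For $s\ge0$ this gives $h(s)=e^{-s}\int_0^\infty e^{-2t}\d t=\tfrac12 e^{-s}$; for $s<0$ it gives $h(s)=e^{-s}\int_{-s}^\infty e^{-2t}\d t=\tfrac12 e^{-s}e^{2s}=\tfrac12 e^{s}$. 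In both cases $h(s)=\tfrac12 e^{-\vert s\vert}$. Plugging back,
\[
\auto f(y)=\prod_{i=1}^n \tfrac12 e^{-\vert y_i\vert}=\frac1{2^n}\,e^{-(\vert y_1\vert+\dots+\vert y_n\vert)}=\frac1{2^n}\,e^{-\Vert y\Vert_{B_1^n}},
\]
which is the assertion.

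This is a routine computation with no genuine obstacle; the only point requiring a little care is bookkeeping the domain of integration according to the sign of each coordinate of $y$, and the reduction to the one-dimensional integrals $h(y_i)$ makes this transparent.
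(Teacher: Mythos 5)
Your proof is correct and follows essentially the same route as the paper's: both exploit the tensor-product structure of $f$ to reduce the autocorrelation to a product of one-dimensional integrals over $\R_+\cap(\R_+ - y_i)$, then compute those by cases on the sign of $y_i$. The only cosmetic difference is that you package the one-dimensional computation as $h(s)=\tfrac12 e^{-|s|}$, while the paper pulls out the factor $e^{-(y_1+\dots+y_n)}$ first.
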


\begin{proof}
We have
\[
f(x)= \begin{cases}
e^{-(x_1+\dots+x_n)} &\text{ for } x\in \R^n_+,\\
0&\text{ for } x\not\in \R^n_+.
\end{cases}
\]
We obtain that
\begin{equation}
f*f^-(y)= e^{-(y_1+\dots+y_n)}\prod_{i=1}^n\int_{\R_+\cap (\R_+-y_i)} e^{-2 x_i}\d x_i.
\end{equation}
Using 
\begin{equation}
2\int_{\R_+\cap (\R_+-y_i)} e^{-2 x_i}\d x_i = 
\begin{cases}
1 &\text{ for } y_i\ge0\\
e^{2y_i} &\text{ for } y_i<0
\end{cases}
\end{equation}
and $\mn{I}(y)=\{1\le i\le n: y_i<0\}$,
we obtain that
\begin{align}
f*f^-(y) &= \frac1{2^n} \,e^{-(y_1+\dots+y_n) +2\sum_{i\in \mn{I}(y)}^n y_i}= \frac1{2^n}\, e^{- \Vert y\Vert_{B_1^n}}
\end{align}
for $y\in\R^n$.
\end{proof}

\goodbreak

We introduce the following notation. For $-1 < \alpha < \infty$ and $s>0$, let
\[
c_{n,\alpha,s} = (1+2/s)^{-1} ((n+2/s) \Beta(\alpha+1, n+2/s))^{-1/\alpha}.
\]
For $s=0$, the limit is
\[
c_{n,\alpha,0} = \Gamma(\alpha+1)^{-1/\alpha},
\]
and for $s = \infty$, we get
\[
c_{n,\alpha,\infty} = (n \Beta(\alpha+1, n))^{-1/\alpha}.
\]

We establish the following inclusion relation.

\begin{theorem}\label{thm_inclusion}
If  $f\in L^2(\R^n)$ is non-zero and log-concave, then 
\[
c_{n,\beta,0} \omr \beta f
\subseteq  c_{n,\alpha,0} \omr \alpha f
\]
for $-1<\alpha<\beta<\infty$.
There is equality if $f(x)=a\,e^{-\Vert x - x_0\Vert_\Delta}$ for $x\in\R^n$ with $a>0, x_0 \in \R^n$ and $\Delta$ an $n$-dimensional simplex having a vertex at the origin.
\end{theorem}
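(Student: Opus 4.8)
The plan is to reduce the inclusion between the normalized radial mean bodies to a pointwise (in the direction $\xi\in\sn$) inequality between the corresponding radii, and then to recognize that this pointwise inequality is exactly the monotonicity statement of Lemma~\ref{lem_extendedMilmanPajor}, applied to a one-dimensional section of the autocorrelation function $\auto f$. First I would reduce to the normalized case: by the transformation behavior of $\hls\alpha$ and $\Pi_2^{\ast,\cdot}$ under translations and linear maps, and by homogeneity of the radial functions in $a$, both sides of the claimed inclusion are unchanged if we replace $f$ by $a\,f(\phi\,\cdot + x_0)$; so it suffices to prove the statement for a fixed log-concave $f$, and in particular the equality assertion reduces to the single function $f(x)=e^{-\Vert x\Vert_{\Delta_n}}$, for which Lemma~\ref{lem_calc} gives $\auto f(y)=2^{-n}e^{-\Vert y\Vert_{B_1^n}}$.

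Next I would fix a direction $\xi\in\sn$ and consider the function $\omega(t)=\tfrac1{\Vert f\Vert_2^2}\,\auto f(t\xi)=\tfrac1{\Vert f\Vert_2^2}\int_{\R^n}f(x)f(x+t\xi)\d x$ on $[0,\infty)$. By Proposition~\ref{prop_convex} (or directly by Lemma~\ref{lem_GG} and the Young inequality), $\auto f$ is log-concave and integrable, so $\omega$ is log-concave with $\omega(0)=1$, hence decreasing on $[0,\infty)$, and the integrability hypotheses \eqref{eq_growthcondition_positive} (for $\alpha>0$) and \eqref{eq_growthcondition_negative} (for $-1<\alpha<0$) hold because a log-concave $\omega$ with $\omega(0)=1$ decays exponentially. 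Then, by the definitions \eqref{eq_omr}, \eqref{eq_omr2}, \eqref{eq_defhls}, \eqref{eq_defpp}, together with the symmetric-difference identity $\vert f(x+t\xi)-f(x)\vert^2$ giving $\Vert f\Vert_2^2-\auto f(t\xi)$ up to the constant $2$, one computes for every $\alpha\in(-1,0)\cup(0,\infty)$ that
\begin{equation}
\Big(\frac{1}{\Gamma(\alpha+1)^{1/\alpha}}\,\rho_{\omr\alpha f}(\xi)\Big)^{\alpha}
=\frac{1}{\Gamma(\alpha)}\int_0^\infty t^{\alpha-1}\,\omega(t)\,\d t
\end{equation}
for $\alpha>0$, and the analytically continued version $\tfrac1{\Gamma(\alpha)}\int_0^\infty t^{\alpha-1}(\omega(t)-1)\,\d t$ for $-1<\alpha<0$, using \eqref{eq_gammacontinuation}; the case $\alpha=0$ matches the definition of $\omr0 f$ with the $-\gamma$ and $e^{-t}$ normalization via \eqref{eq_analyticcontinuation}. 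Thus, with $\varphi(t)=t$ (so $\zeta$ from Lemma~\ref{lem_extendedMilmanPajor} would be identically~$1$), the quantity $\Gamma(\alpha+1)^{-1/\alpha}\rho_{\omr\alpha f}(\xi)$ is, up to the direction-independent normalization, the $\alpha$-th root of a moment of $\omega$ divided by the corresponding moment of $e^{-t}$, i.e.\ exactly $\zeta(\alpha)$ for the pair $(\omega_{\mathrm{base}}(t)=e^{-t},\ \varphi)$ where $\varphi$ encodes the relation between $\omega$ and $e^{-t}$ --- concretely, write $\omega(t)=e^{-\varphi_\xi(t)}$ with $\varphi_\xi(t)=-\log\omega(t)$; then $\varphi_\xi(0)=0$, $\varphi_\xi$ is increasing (since $\omega$ is decreasing), and $\varphi_\xi(t)/t$ is increasing (since $\log\omega$ is concave with $\log\omega(0)=0$). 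Applying Lemma~\ref{lem_extendedMilmanPajor} with $\omega_{\mathrm{base}}(t)=e^{-t}$ and $\varphi=\varphi_\xi$ then gives that $\alpha\mapsto\Gamma(\alpha+1)^{-1/\alpha}\rho_{\omr\alpha f}(\xi)$ is decreasing on $(-1,\infty)$, which is precisely the radius inequality $\Gamma(\beta+1)^{-1/\beta}\rho_{\omr\beta f}(\xi)\le\Gamma(\alpha+1)^{-1/\alpha}\rho_{\omr\alpha f}(\xi)$ for $\alpha<\beta$, hence the inclusion. For equality, $\varphi_\xi(t)=\lambda_\xi t$ for every $\xi$ forces $\omega(t\xi)=e^{-\lambda_\xi t}$, i.e.\ $\auto f$ has exponential profile in every direction; by Lemma~\ref{lem_calc} this is the case for $f(x)=a\,e^{-\Vert x-x_0\Vert_\Delta}$, giving the stated equality cases.

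The main obstacle I expect is the bookkeeping in the last paragraph: verifying that $-\log\omega$ satisfies the precise hypotheses of Lemma~\ref{lem_extendedMilmanPajor} (that $\varphi_\xi$ and $\varphi_\xi(t)/t$ are increasing) from log-concavity of $\auto f$ with value $1$ at the origin, and then correctly matching the three normalization conventions --- the $\Gamma(\alpha)$ factor for $\alpha>0$, the analytically continued $\Gamma(\alpha)$ for $-1<\alpha<0$, and the $-\gamma+\int_0^\infty\frac1t(\omega(t)-e^{-t})\d t$ expression at $\alpha=0$ --- with the unified $\zeta(\alpha)$ from Lemma~\ref{lem_extendedMilmanPajor}. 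Handling $f$ that is not everywhere positive or whose support is lower-dimensional requires a small additional remark (e.g.\ the inclusion is vacuous or both sides degenerate), but this is routine. Once the identification with $\zeta$ is in place, the monotonicity and equality characterization are immediate from the already-established Lemma~\ref{lem_extendedMilmanPajor}.
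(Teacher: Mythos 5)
Your proposal is correct and follows essentially the same route as the paper's proof: fix $\xi\in\sn$, set $g=\auto f$, apply Lemma~\ref{lem_extendedMilmanPajor} with $\omega(t)=g(0)e^{-t}$ and $\varphi(t)=-\log(g(t\xi)/g(0))$ (convexity of $\varphi$ and monotonicity of $\varphi(t)/t$ from log-concavity of $g$), identify $\zeta(\alpha)=\Gamma(\alpha+1)^{-1/\alpha}\rho_{\omr\alpha f}(\xi)$, and deduce the equality case from Lemma~\ref{lem_calc} together with affine invariance. The brief detour through ``$\varphi(t)=t$'' in your writeup is a distraction but does not affect the substance, which matches the paper.
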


\begin{proof}
Note that $g=f*f^-$ is even, attains a maximum at $y=0$ and that $g(0)=\Vert f\Vert_2^2$. Moreover,  $g$ is log-concave by Lemma \ref{lem_GG}. 

We fix $\xi \in \sn$ and see that $t\mapsto g(t \xi)$ is a positive, decreasing, log-concave function. We apply Lemma \ref{lem_extendedMilmanPajor} to $\omega(t) = g(0) e^{-t}, \varphi(t)=-\log(g(t\xi)/g(0))$. Formula \eqref{eq_zeta} becomes

\[
    \zeta(\alpha) 
    = \left(\frac{\int_0^\infty t^{\alpha-1}g(t\xi) \d t}{\int_0^\infty t^{\alpha-1} g(0) e^{-t} \d t} \right)^{1/\alpha}
    = \frac{\rho_{\omr \alpha f}(\xi)}{\Gamma(\alpha+1)^{1/\alpha}}
\]
for $\alpha>0$, where we used definitions \eqref{eq_defhls} and \eqref{eq_omr}, and
\begin{align}
    \zeta(\alpha) 
    &= \left(\frac{\int_0^\infty t^{\alpha-1}( g(t\xi)-g(0)) \d t}{g(0)\int_0^\infty t^{\alpha-1} (e^{-t}-1) \d t} \right)^{1/\alpha}\\
    &= \left(\frac{\int_0^\infty t^{\alpha-1}\big(\int_{\R^n} (f(x)\,f(x+t\xi)-f^2(x))\d x\big) \d t}{g(0)\int_0^\infty t^{\alpha-1} (e^{-t}-1) \d t} \right)^{1/\alpha}\\
      &= \left(-\frac{\int_0^\infty t^{\alpha-1}\big(\int_{\R^n} (f(x+t\xi)-f(x))^2\d x\big) \d t}{2\,g(0)\,\Gamma(\alpha)} \right)^{1/\alpha}\\
      &= \frac{\rho_{\omr \alpha f}(\xi)}{\Gamma(\alpha+1)^{1/\alpha}}
\end{align}
for $-1<\alpha<0$, where we used  \eqref{eq_gammacontinuation} to obtain the gamma function and definitions \eqref{eq_defpp} and \eqref{eq_omr2}.
This proves the inclusion. There is equality in Lemma \ref{lem_extendedMilmanPajor}
 if $g(t \xi) = e^{- h(\xi) t}$ for some function $h: \sn\to (0,\infty)$. Hence, the equality case follows from Lemma \ref{lem_calc} and the $\sln$ and  translation invariance and homogeneity of $\vol{\omr \alpha f}$.
\end{proof}

Using \eqref{eq_omr} and \eqref{eq_omrn}, we obtain both Theorem \ref{thm_reverseineq} and Theorem \ref{thm_reverseineq2} immediately from the above result. Using \eqref{eq_omr2}, \eqref{eq_omrn}, and H\"older's inequality, we also obtain the following inequality from Theorem \ref{thm_inclusion}.

\begin{theorem}\label{thm_reverseineq3}
For $\,0<\alpha<1/2$ and  log-concave $f\in L^2(\R^n)$, 
\[
\alpha (c_{n,n,0}/c_{n,-2\alpha,0})^{2\alpha}\,\vol{\Pi_2^{\ast,\alpha} f}^{-\frac{2\alpha}n} \leq  \|f\|_2^{2+\frac{4\alpha}n} \|f\|_1^{-\frac{4\alpha}n} \leq \Vert f\Vert_{\frac{2n}{n-2\alpha}}^2. \]
There is equality in the first inequality if $f(x)=a\,e^{-\Vert x - x_0\Vert_\Delta}$ for $x\in\R^n$ with $a\ge 0, x_0 \in \R^n$ and $\Delta$ an $n$-dimensional simplex having a vertex at the origin. 
\end{theorem}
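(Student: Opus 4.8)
The plan is to deduce this from the inclusion in Theorem~\ref{thm_inclusion}, exactly as Theorem~\ref{thm_reverseineq} and Theorem~\ref{thm_reverseineq2} were obtained, but now with a \emph{negative} index so that a fractional polar projection body appears in place of $\hls\alpha f$. I apply Theorem~\ref{thm_inclusion} with $\beta=n$ and index $-2\alpha$, which is admissible because $0<\alpha<1/2$ forces $-1<-2\alpha<0<n$. Since $\Gamma(-2\alpha+1)^{1/(-2\alpha)}=\Gamma(1-2\alpha)^{-1/(2\alpha)}$, the inclusion becomes
\[
\frac1{\Gamma(n+1)^{1/n}}\,\omr n f\ \subseteq\ \Gamma(1-2\alpha)^{1/(2\alpha)}\,\omr{-2\alpha}f .
\]
As $f$ is log-concave and in $L^2(\R^n)$, it is also in $L^1(\R^n)$, so $\omr n f$ is a convex body by Proposition~\ref{prop_convex} and \eqref{eq_omr}; passing to $n$-dimensional volumes (monotone under inclusion of star-shaped sets with measurable radial functions) gives
\[
\frac1{\Gamma(n+1)}\,\vol{\omr n f}\ \le\ \Gamma(1-2\alpha)^{n/(2\alpha)}\,\vol{\omr{-2\alpha}f}.
\]

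Next I substitute the identities $\vol{\omr n f}=\|f\|_1^2/\|f\|_2^2$ from \eqref{eq_omrn} and, from \eqref{eq_omr2} applied with index $-2\alpha$, the relation $\omr{-2\alpha}f=(\alpha/\|f\|_2^2)^{-1/(2\alpha)}\,\Pi_2^{\ast,\alpha}f$, which yields $\vol{\omr{-2\alpha}f}=(\alpha/\|f\|_2^2)^{-n/(2\alpha)}\vol{\Pi_2^{\ast,\alpha}f}$. Inserting both, solving for $\vol{\Pi_2^{\ast,\alpha}f}$, and then raising to the power $-2\alpha/n$ (which reverses the inequality) while collecting the Gamma factors produces
\[
\frac{\alpha}{\Gamma(n+1)^{2\alpha/n}\,\Gamma(1-2\alpha)}\,\vol{\Pi_2^{\ast,\alpha}f}^{-2\alpha/n}\ \le\ \|f\|_2^{2+4\alpha/n}\,\|f\|_1^{-4\alpha/n},
\]
the first asserted inequality; should $\vol{\Pi_2^{\ast,\alpha}f}$ be infinite the left-hand side is $0$ and nothing is to prove.

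The second inequality is the interpolation form of H\"older's inequality: with $r=2$, $p=1$, $q=\tfrac{2n}{n-2\alpha}$ and $\theta=\tfrac{n}{n+2\alpha}$ one has $p\le r\le q$ and $\tfrac1r=\tfrac{1-\theta}{p}+\tfrac{\theta}{q}$, hence $\|f\|_2\le\|f\|_1^{2\alpha/(n+2\alpha)}\|f\|_{2n/(n-2\alpha)}^{n/(n+2\alpha)}$, and raising to the power $2+4\alpha/n=2(n+2\alpha)/n$ gives $\|f\|_2^{2+4\alpha/n}\|f\|_1^{-4\alpha/n}\le\|f\|_{2n/(n-2\alpha)}^2$. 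For the equality case, if $f(x)=a\,e^{-\Vert x-x_0\Vert_\Delta}$ with $\Delta$ an $n$-dimensional simplex with a vertex at the origin, then Theorem~\ref{thm_inclusion} gives equality in the inclusion, hence equality after taking volumes, hence equality in the first inequality above. I do not expect a real obstacle here: the entire argument is a transcription of Theorem~\ref{thm_inclusion}, and the only delicate point is verifying that, after raising to the negative exponent $-2\alpha/n$, the constant is exactly $\alpha/(\Gamma(n+1)^{2\alpha/n}\Gamma(1-2\alpha))$.
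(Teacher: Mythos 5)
Your proof is correct and follows the same route the paper indicates (deduce from Theorem~\ref{thm_inclusion} with indices $-2\alpha$ and $n$, then substitute \eqref{eq_omr2} and \eqref{eq_omrn}, then apply H\"older); you have simply written out in full what the paper dispatches in one sentence, including the correct bookkeeping of Gamma factors and the flip upon raising to the negative exponent $-2\alpha/n$.
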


\noindent
The inequality in the previous theorem is a reverse inequality to the affine fractional $L^2$ Sobolev inequality \eqref{eq_L2fracsob}.

\goodbreak
\subsection{Results for $s$-concave functions}
We obtain the following inclusion relation.
\begin{theorem}\label{thm_inclusion_s}
Let  $s>0$. 
If  $f\in L^2(\R^n)$ is non-zero and $s$-concave, then
\[c_{n,\beta,s} \omr \beta f\subseteq  
c_{n,\alpha,s} \omr \alpha f
\] 
for $-1<\alpha<\beta<\infty$.
\end{theorem}

\begin{proof}
As in the proof of Theorem \ref{thm_inclusion}, note that $g=f*f^-$ is even, continuous, and attains a maximum at $y=0$. For $\xi \in S^{n-1}$, it follows that $t\mapsto g(t \xi)$ is positive and decreasing. By Lemma \ref{lem_GG}, the function $g$ is $r$-concave with $r={s}/{( ns + 2)}$.

We apply Lemma \ref{lem_extendedMilmanPajor} with $\omega(t) = g(0) \pl{(1-rt)}^{1/r}$ and $\varphi(t)=(1-(g(t\xi)/g(0))^r)/r$. 
We obtain that
\[    \zeta(\alpha) 
    = \left(\frac{\int_0^\infty t^{\alpha-1}g(t\xi) \d t}{\int_0^\infty g(0) t^{\alpha-1} \pl{(1-rt)}^{1/r} \d t} \right)^{1/\alpha}
    = \frac{r \,\rho_{\omr \alpha f}(\xi)}{(\alpha \Beta(\alpha, 1+\frac1r))^{1/\alpha}}
\]
for $\alpha>0$, where we used definitions \eqref{eq_defhls} and \eqref{eq_omr}, and
\[
\zeta(\alpha) 
    = \left(\frac{\int_0^\infty t^{\alpha-1}(g(0) - g(t\xi)) \d t}{\int_0^\infty t^{\alpha-1} (g(0) \pl{(1-rt)}^{1/r} - g(0)) \d t} \right)^{1/\alpha}\\
    = \frac{r\,\rho_{\omr \alpha f}(\xi)}{(\alpha\Beta(\alpha,1+\frac1r)) ^{1/\alpha}}
\]
for $-1<\alpha<0$, where we used formula \eqref{eq_betacontinuation} and the definitions \eqref{eq_defpp} and \eqref{eq_omr2}. The result now follows from Lemma~\ref{lem_extendedMilmanPajor}.
\end{proof}

\goodbreak
For $s\to 0$, we recover Theorem \ref{thm_inclusion} from Theorem \ref{thm_inclusion_s}. For $s\to\infty$ and $f=\chi_E$ for a convex body $E\subset\R^n$, Theorem \ref{thm_inclusion_s} implies that
\begin{equation}\label{eq_rmb_incl}
c_{n,\beta,\infty} \omr \beta E\subseteq  
c_{n,\alpha,\infty} \omr \alpha E
\end{equation}
for $-1<\alpha<\beta$.
This recovers  Theorem 5.5  by Gardner and Zhang \cite{GZ},
who showed that there is equality in \eqref{eq_rmb_incl} precisely for $n$-dimensional simplices. The problem to determine the precise equality conditions in Theorem \ref{thm_inclusion} and Theorem~\ref{thm_inclusion_s} is open. 

It follows from the proof of Theorem \ref{thm_inclusion_s} under the assumptions given there that
\begin{equation}\label{eq_rad_inclusion}
c_{n,\beta,s}\, \rho_{\omr \beta f}(\xi)
\leq  
c_{n,\alpha,s}\, \rho_{\omr \alpha f}(\xi)
\end{equation}
with equality for $\xi\in\sn$ if $f*f^-(t\xi)= a\pl{(1-\lambda t)}^{1/s}$ for $t>0$ with $a\ge 0$ and $\lambda >0$. The following lemma shows that the inequality in \eqref{eq_rad_inclusion} is sharp in some directions and that the constants in Theorem \ref{thm_inclusion_s} are optimal.

\begin{lemma}
Let $s>0$. If $f(x) = \pl{(1-\|x\|_{\Delta_n})}^{1/s}$ for $x\in\R^n$, then 
\[
    f*f^-(y) = a \pl{\big(1-\frac 12 \|y\|_{B^n_1}\big)}^{n+2/s} 
\]
for every $y\in\R^n$ with $y_1+\dots+y_n = 0$, where $a=\Beta(n,1+2/s)/(n-1)!$.

\end{lemma}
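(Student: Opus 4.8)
The plan is to compute $\auto f(y)=\int_{\R^n}f(x)\,f(x+y)\d x$ directly, restricting to the hyperplane $\{y_1+\dots+y_n=0\}$, in close analogy with the proof of Lemma \ref{lem_calc}. First I would record that $f(x)=\pl{(1-(x_1+\dots+x_n))}^{1/s}$ is supported on the simplex $\Delta_n$, so that the integrand $f(x)f(x+y)$ is supported on $\Delta_n\cap(\Delta_n-y)$. Writing $u=x_1+\dots+x_n$ and $v=u+(y_1+\dots+y_n)=u$ (using the hypothesis $y_1+\dots+y_n=0$), the product of the two factors becomes $\pl{(1-u)}^{1/s}\pl{(1-u)}^{1/s}=\pl{(1-u)}^{2/s}$, which depends only on $u$ and not on the individual coordinates of $x$. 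This is the structural simplification that makes the computation tractable, and it is exactly why the statement is restricted to the hyperplane $y_1+\dots+y_n=0$.

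Next I would slice the integral along level sets of $u=x_1+\dots+x_n$. For fixed $u\in(0,1)$ one integrates the constant $\pl{(1-u)}^{2/s}$ over the $(n-1)$-dimensional region $\{x:\ x_1+\dots+x_n=u,\ x\in\R^n_+,\ x+y\in\R^n_+\}$; the set $\{x\in\R^n_+: x_1+\dots+x_n=u\}$ is a dilate $u\Delta_n^{(n-1)}$ of the standard $(n-1)$-simplex, and imposing the additional constraints $x_i\ge -y_i$ for $i\in\mn{I}(y)$ (where $\mn{I}(y)=\{i: y_i<0\}$) cuts this simplex down. The key geometric fact I expect to use is that intersecting the dilated simplex with the shifted orthant removes from it a collection of smaller simplices, and when $y_1+\dots+y_n=0$ the total volume removed can be packaged cleanly: one finds the relative $(n-1)$-volume of the admissible slice equals $\tfrac{u^{n-1}}{(n-1)!}\pl{(1-\tfrac{u_0}{u})}^{n-1}$ or, more to the point after carrying out the $u$-integration, the whole $y$-dependence collapses into the single parameter $\tfrac12\|y\|_{B_1^n}=\tfrac12\sum_i|y_i|$. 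Concretely, I would show that after the inner $(n-1)$-dimensional integration the problem reduces to a one-dimensional integral of the form $\int t^{n-1}\pl{(1-t)}^{2/s}$ over an interval whose length is $1-\tfrac12\|y\|_{B_1^n}$, up to the substitution rescaling; recognizing this as a Beta integral yields the factor $\Beta(n,1+2/s)/(n-1)!$ times $\pl{(1-\tfrac12\|y\|_{B_1^n})}^{n+2/s}$.

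The main obstacle I anticipate is the bookkeeping in the $(n-1)$-dimensional slice computation: one must verify that the intersection of the scaled simplex $\{x\in\R^n_+:\sum x_i=u\}$ with $\{x_i\ge-y_i \text{ for all } i\in\mn{I}(y)\}$ has relative volume exactly $\pl{(1-c/u)}^{n-1}$ where $c$ depends only on $\sum_{i\in\mn{I}(y)}(-y_i)=\tfrac12\|y\|_{B_1^n}$ (the last equality again using $\sum y_i=0$), rather than on the individual negative coordinates. This is a standard inclusion–exclusion over the faces, but getting the simultaneous cancellation to depend only on $\tfrac12\|y\|_{B_1^n}$ is the crux; the hypothesis $y_1+\dots+y_n=0$ is precisely what guarantees it, since it forces $\sum_{i:y_i>0}y_i=\sum_{i:y_i<0}(-y_i)=\tfrac12\sum_i|y_i|$ and makes the positive-coordinate constraints inactive on the relevant range. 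Once the relative slice volume is in hand, the final step is the elementary substitution and the identification of the Beta function, which I would not belabor. As a consistency check I would note that setting $s\to\infty$ recovers the corresponding statement for $f=\chi_{\Delta_n}$ underlying Lemma \ref{lem_calc}'s analogue, and that at $y=0$ both sides equal $\Vert f\Vert_2^2=\Beta(n,1+2/s)/(n-1)!$.
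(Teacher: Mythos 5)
Your plan is essentially the paper's: slice the integral along level sets of $u=x_1+\dots+x_n$, use $\sum_i y_i=0$ to make the integrand depend on $u$ alone, compute the $(n-1)$-volume of the admissible slice, and reduce to a one-dimensional Beta integral; the final formula and the Beta identification are correct.

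However, the step you flag as the ``main obstacle'' and describe as ``a standard inclusion--exclusion over the faces'' requiring a ``simultaneous cancellation'' is a detour you should avoid: no inclusion--exclusion is needed, and there is nothing that would cancel. For each $i$, the two constraints $x_i\ge 0$ and $x_i\ge -y_i$ combine into the single constraint $x_i\ge \mn{(y_i)}:=\max\{0,-y_i\}$, so the admissible slice $\{x:\ \sum_i x_i=u,\ x\in\R^n_+,\ x+y\in\R^n_+\}$ is simply the \emph{translate} by $\mn{y}=(\mn{(y_1)},\dots,\mn{(y_n)})$ of the scaled standard simplex $\{x\in\R^n_+:\ \sum_i x_i=u-\|\mn{y}\|_{B^n_1}\}$. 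Its $(n-1)$-dimensional measure (normalized so that $\d x=\d(\text{slice})\,\d u$) is therefore $\pl{(u-\|\mn{y}\|_{B^n_1})}^{n-1}/(n-1)!$, and the hypothesis $\sum_i y_i=0$ gives $\|\mn{y}\|_{B^n_1}=\|\pl{y}\|_{B^n_1}=\tfrac12\|y\|_{B^n_1}$ directly --- there is no interplay between positive- and negative-coordinate constraints to be ``packaged.'' This translation $x\mapsto x+\mn{y}$ is exactly what the paper carries out, applied once to the full $n$-dimensional integral before slicing, which yields the identity $\auto f(y)=\frac1{(n-1)!}\int_0^\infty r^{n-1}\,\delta(r+\|\mn{y}\|_{B^n_1})\,\delta(r+\|\pl{y}\|_{B^n_1})\,\d r$ with $\delta(t)=\pl{(1-t)}^{1/s}$ valid for all $y$; the hypothesis $\sum_i y_i=0$ is only invoked at the very end to merge the two factors. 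Either order of operations works --- the point to internalize is that the geometric step is a translation, not an inclusion--exclusion.
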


\begin{proof} Let $\mn{t}=\max\{0,-t\}$ for $t\in\R$ and $\mn{y}=(\mn{(y_1)}, \dots, \mn{(y_n)})$ for $y\in\R^n$.  Setting $\delta(t) = \pl{(1-t)}^{1/s}$, we have
\begin{align}
f*f^-(y)
&= \int_{\R^n_+\cap (\R^n_+-y)}  \delta\big( \sum_{i=1}^n x_i \big)\, \delta\big( \sum_{i=1}^n (x_i + y_i) \big) \d x \\
&= \int_{\R^n_++\mn{y}} \delta\big( \sum_{i=1}^n x_i \big)\, \delta\big( \sum_{i=1}^n (x_i + y_i) \big) \d x \\
&= \int_{\R^n_+} \delta\big( \sum_{i=1}^n (x_i+\mn{(y_i)})\big) \,\delta\big( \sum_{i=1}^n (x_i + y_i+\mn{(y_i)} \big) \d x \\
&= \int_{\R^n_+} \delta\big( \sum_{i=1}^n x_i+\sum_{i=1}^n \mn{(y_i)} \big) \,\delta\big( \sum_{i=1}^n x_i + \sum_{i=1}^n \pl{(y_i)} ) \d x \\
&= \frac1{(n-1)!} \int_0^\infty r^{n-1} \delta( r + \|\mn{y}\|_{B^n_1} ) \,\delta( r + \|\pl{y}\|_{B^n_1} ) \d r.
\end{align}
Now note that if  $\|\mn{y}\|_{B^n_1} = \|\pl{y}\|_{B^n_1} = \frac 12 \|y\|_{B^n_1}=t$, then
\begin{align}
f*f^-(y)
&= \frac1{(n-1)!} \int_0^\infty r^{n-1} \delta( r + t)^2 \d r \\
&= \frac1{(n-1)!} \int_0^\infty r^{n-1} \pl{(1 - r - t)}^{2/s} \d r. 
\end{align}
For $t > 1$, this quantity is $0$. Otherwise, we get
\begin{align}
f*f^-(y)
&= \frac1{(n-1)!} \int_0^{1-t} r^{n-1} (1 - r - t)^{2/s} \d r \\
&= \frac1{(n-1)!} (1 - t) \int_0^1 ((1 - t)s)^{n-1} ((1 - t) - (1 - t) r)^{2/s} \d r \\
&= \frac1{(n-1)!} (1 - t)^{n+2/s} \int_0^1 r^{n-1} (1-r)^{2/s} \d r, 
\end{align}
which completes the proof.
\end{proof}

\goodbreak

Using \eqref{eq_omr}, \eqref{eq_omrn}, and H\"older's inequality, we obtain the following inequalities from Theorem \ref{thm_inclusion_s}.

\begin{corollary}\label{cor_reverseineq}
Let $s>0$. If $f\in L^2(\R^n)$ is $s$-concave on its support, then 
\[ \alpha(c_{n,\alpha,s}/c_{n,n,s})^\alpha 
\,\vol{\hls \alpha f}^\frac{\alpha}n \geq  \|f\|_2^{2-\frac{2\alpha}n} \|f\|_1^{\frac{2\alpha}n} \geq \Vert f\Vert_{\frac{2n}{n+\alpha}}^2\]
for $\,0<\alpha<n$ and the inequalities are reversed for $\alpha>n$.
\end{corollary}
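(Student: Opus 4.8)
The plan is to deduce both chains of inequalities from the inclusion relation of Theorem~\ref{thm_inclusion_s}, specialized so that one of its two free parameters equals $n$, by passing to volumes and then translating between $\omr{\alpha}f$ and $\hls{\alpha}f$ via the normalizations \eqref{eq_omr} and \eqref{eq_omrn}. We may assume $f$ is non-zero (and, after disposing of the trivial cases, that $\|f\|_1$ and $\vol{\hls{\alpha}f}$ are finite). As preparatory bookkeeping, one records the identity
\[
(n+\tfrac2s)\,\Beta(m+1,\, n+\tfrac2s)=m\,\Beta(m,\, n+1+\tfrac2s)\qquad(m>0),
\]
an immediate consequence of $\Beta(a,b)=\Gamma(a)\Gamma(b)/\Gamma(a+b)$ and the functional equation of $\Gamma$; it converts the normalizing constants of Theorem~\ref{thm_inclusion_s} into those appearing in the statement of the corollary.

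For $0<\alpha<n$, apply Theorem~\ref{thm_inclusion_s} with $\beta=n$, take $n$-dimensional volumes of both sides of the resulting inclusion --- volume being monotone under inclusion of star-shaped sets and homogeneous of degree $n$ under dilation --- and substitute $\vol{\omr{n}f}=\|f\|_1^2/\|f\|_2^2$ from \eqref{eq_omrn} and $\vol{\omr{\alpha}f}=(\alpha/\|f\|_2^2)^{n/\alpha}\vol{\hls{\alpha}f}$ from \eqref{eq_omr}. After rewriting the constants via the identity above, the factors $\alpha^{n/\alpha}$ cancel, and a rearrangement followed by raising to the power $\alpha/n$ produces exactly the first inequality
\[
\frac{(n \Beta(n,n+1+\frac2s))^\frac{\alpha}n}{\Beta(\alpha, n+1+\frac2s)}\,\vol{\hls \alpha f}^\frac{\alpha}n \geq  \|f\|_2^{2-\frac{2\alpha}n} \|f\|_1^{\frac{2\alpha}n}.
\]
For $\alpha>n$ the identical computation applies, but now $n$ is the smaller of the two parameters in Theorem~\ref{thm_inclusion_s}, so the inclusion --- and hence the resulting inequality --- is reversed.

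The second inequality in each chain is interpolation of $L^p$-norms. With $p=2n/(n+\alpha)$ one has $\tfrac1p=\tfrac\alpha n\cdot 1+(1-\tfrac\alpha n)\cdot\tfrac12$. For $0<\alpha<n$ this exhibits $p$ as an interpolation exponent between $1$ and $2$, so Hölder's inequality gives $\|f\|_p\le\|f\|_1^{\alpha/n}\|f\|_2^{1-\alpha/n}$, and squaring yields $\|f\|_p^2\le\|f\|_1^{2\alpha/n}\|f\|_2^{2-2\alpha/n}$. For $\alpha>n$ one instead has $p<1$, so $1$ lies between $p$ and $2$; interpolation then gives $\|f\|_1\le\|f\|_p^{n/\alpha}\|f\|_2^{1-n/\alpha}$, and raising to the power $2\alpha/n$ and multiplying through by $\|f\|_2^{2-2\alpha/n}$ produces $\|f\|_2^{2-2\alpha/n}\|f\|_1^{2\alpha/n}\le\|f\|_p^2$, the reversed second inequality. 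No step presents a genuine obstacle; the only real care needed is in tracking the direction of every inequality (and carrying along the Gamma/Beta identity) once $\alpha$ crosses the threshold $n$.
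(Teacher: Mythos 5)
Your proposal is correct and follows exactly the route the paper intends: Corollary~\ref{cor_reverseineq} is proved in the paper by one line, ``Using \eqref{eq_omr}, \eqref{eq_omrn} and H\"older's inequality, we obtain the following inequalities from Theorem \ref{thm_inclusion_s},'' and your argument is precisely that computation spelled out. The Beta identity $(n+\tfrac2s)\Beta(m+1,n+\tfrac2s)=m\,\Beta(m,n+1+\tfrac2s)$ indeed converts the normalizing constants of Theorem~\ref{thm_inclusion_s} into those of the corollary, the cancellation of the $\alpha^{n/\alpha}$ factors works as you say, and the Lyapunov interpolation step (with $\theta=\alpha/n$ when $0<\alpha<n$, and with $\theta=n/\alpha$ applied to the middle exponent $1$ when $\alpha>n$) gives the second inequality in each chain with the correct orientation; in the latter case the H\"older exponents $2\alpha/(n+\alpha)$ and $2\alpha/(\alpha-n)$ are both $\ge 1$, so the interpolation inequality applies even though $p=2n/(n+\alpha)<1$.
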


\noindent
\goodbreak
Using  \eqref{eq_omr2}, \eqref{eq_omrn}, and H\"older's inequalities, we obtain the following inequality from Theorem \ref{thm_inclusion_s}.

\begin{corollary}\label{cor_reverseineq2}
Let $s>0$. If $f\in L^2(\R^n)$ is $s$-concave on its support, then 
\[ \alpha (c_{n,n,s}/c_{n,-2\alpha,s})^{2\alpha}
\,\vol{\Pi_2^{\ast,\alpha} f}^{-\frac{2\alpha}n} \leq  \|f\|_2^{2+\frac{4\alpha}n} \|f\|_1^{-\frac{4\alpha}n} \leq \Vert f\Vert_{\frac{2n}{n-2\alpha}}^2\]
for $\,0<\alpha<1/2$. 
\end{corollary}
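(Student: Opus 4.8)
The plan is to deduce both inequalities from Theorem~\ref{thm_inclusion_s} specialized to $\beta=n$ and to the parameter $-2\alpha$ in place of $\alpha$, together with the normalizations \eqref{eq_omr2} and \eqref{eq_omrn} and a single use of H\"older's inequality. Since $0<\alpha<1/2$ we have $-1<-2\alpha<0<n$, so Theorem~\ref{thm_inclusion_s} applies and yields the inclusion
\[
\frac1{\big((n+\tfrac2s)\Beta(n+1,n+\tfrac2s)\big)^{1/n}}\,\omr n f
\subseteq
\frac1{\big((n+\tfrac2s)\Beta(1-2\alpha,n+\tfrac2s)\big)^{-1/(2\alpha)}}\,\omr{-2\alpha}f.
\]
Passing to $n$-dimensional volumes (an inclusion of star-shaped sets with measurable radial functions passes to volumes) and substituting $\vol{\omr n f}=\|f\|_1^2/\|f\|_2^2$ from \eqref{eq_omrn} and, since $-(-2\alpha)/2=\alpha$, $\omr{-2\alpha}f=(\alpha/\|f\|_2^2)^{-1/(2\alpha)}\,\Pi_2^{\ast,\alpha}f$ from \eqref{eq_omr2}, one arrives at an inequality relating $\|f\|_1^2/\|f\|_2^2$ to a negative power of $\vol{\Pi_2^{\ast,\alpha}f}$ with Beta-function constants.

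It then remains to put the constants into the claimed form. The functional equation of $\Gamma$ gives $(n+\tfrac2s)\Beta(n+1,n+\tfrac2s)=n\,\Beta(n,n+1+\tfrac2s)$, and the analytic continuation of the Beta function gives $(n+\tfrac2s)\Beta(1-2\alpha,n+\tfrac2s)=2\alpha\,\vert\Beta(-2\alpha,n+1+\tfrac2s)\vert$, the absolute value being forced because $\Beta(-2\alpha,n+1+\tfrac2s)<0$ for $0<\alpha<1/2$. Inserting these identities, solving for $\vol{\Pi_2^{\ast,\alpha}f}$, and raising both sides to the power $-2\alpha/n$ — which reverses the inequality, as $-2\alpha/n<0$ — the powers of $\|f\|_1$ and $\|f\|_2$ recombine to give exactly
\[
\frac{(n\,\Beta(n,n+1+\tfrac2s))^{-2\alpha/n}}{2\,\vert\Beta(-2\alpha,n+1+\tfrac2s)\vert}\,\vol{\Pi_2^{\ast,\alpha}f}^{-2\alpha/n}
\le
\|f\|_2^{2+4\alpha/n}\,\|f\|_1^{-4\alpha/n};
\]
the case $\vol{\Pi_2^{\ast,\alpha}f}=\infty$ makes the left side $0$, so it needs no separate treatment. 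For the second inequality I would interpolate the $L^2$ norm between $L^1$ and $L^{2n/(n-2\alpha)}$: the interpolation parameter works out to $n/(n+2\alpha)$, so $\|f\|_2\le\|f\|_{2n/(n-2\alpha)}^{n/(n+2\alpha)}\|f\|_1^{2\alpha/(n+2\alpha)}$, and raising this to the power $2(n+2\alpha)/n=2+4\alpha/n$ yields $\|f\|_2^{2+4\alpha/n}\|f\|_1^{-4\alpha/n}\le\|f\|_{2n/(n-2\alpha)}^2$.

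There is no new idea beyond Theorem~\ref{thm_inclusion_s}; the only real difficulty is bookkeeping — tracking the exponents and, above all, reversing the inequality correctly when passing through the negative power $-2\alpha/n$ — together with verifying the two Beta-function identities. One should also record that an $s$-concave function $f\in L^2(\R^n)$ has bounded convex support and hence lies in $L^1(\R^n)$, so that $\|f\|_1,\|f\|_2\in(0,\infty)$ for $f\neq0$ and the quotients above are meaningful. The equality statement would be obtained by tracing the equality condition of Lemma~\ref{lem_extendedMilmanPajor} back through the derivation and using the $\sln$- and translation-invariance and homogeneity of $\vol{\Pi_2^{\ast,\alpha}f}$, exactly as in the proof of Theorem~\ref{thm_reverseineq3}.
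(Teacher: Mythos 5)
Your proof is correct and follows essentially the same route as the paper, which derives the corollary in a single line from Theorem~\ref{thm_inclusion_s}, \eqref{eq_omr2}, \eqref{eq_omrn}, and H\"older's inequality; you simply fill in the bookkeeping. The substitutions $\beta=n$ and $\alpha\mapsto -2\alpha$ (so that $-1<-2\alpha<0$) are correct, the two Beta-function identities you state are both valid (the first via $\Gamma(n+1)=n\Gamma(n)$ and $\Gamma(n+1+\frac2s)=(n+\frac2s)\Gamma(n+\frac2s)$, the second via the analytic continuation $\Gamma(-2\alpha)=\Gamma(1-2\alpha)/(-2\alpha)$, which also explains the absolute value), and the exponent $-2\alpha/n<0$ is handled with the inequality correctly reversed. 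The interpolation step for the second inequality with $\theta=n/(n+2\alpha)$ is also right. One small remark: Corollary~\ref{cor_reverseineq2} contains no equality statement (indeed the paper explicitly leaves sharpness of this inequality as an open problem immediately afterward), so your final sentence about tracing equality conditions ``as in the proof of Theorem~\ref{thm_reverseineq3}'' is not needed and would actually overclaim, since for $s$-concave functions the equality condition of Lemma~\ref{lem_extendedMilmanPajor} is only sufficient, not shown necessary.
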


\noindent
The following problems remain open:  Are the first inequalities in Corollary \ref{cor_reverseineq} and Corollary \ref{cor_reverseineq2} sharp for $f\not\equiv 0$?

\subsection*{Acknowledgments}
The authors thank the referees for their helpful remarks. J.~Haddad was supported by grants RYC2021-031572-I and PID2022-136320NB-I00, funded by the Ministry of Science and Innovation / State Research Agency /
10.13039 / 501100011033 and by the E.U. Next Generation EU/Recovery, Transformation and Resilience Plan.
M.~Ludwig was supported, in part, by the Austrian Science Fund (FWF) Grant-DOI:  10.55776/P34446 and Grant-DOI: 10.55776/P37030.


\begin{thebibliography}{10}

\bibitem{Ball1988}
K.~Ball, {\em Logarithmically concave functions and sections of convex sets in {${\bf R}\sp n$}}, Studia Math. {\bf 88} (1988), 69--84.

\bibitem{Beckner2015}
W.~Beckner, {\em Functionals for multilinear fractional embedding}, Acta Math. Sin. (Engl. Ser.) {\bf 31} (2015), 1--28.
 
\bibitem{Burchard96}
A.~Burchard, {\em Cases of equality in the {R}iesz rearrangement inequality}, Ann. of Math. (2) {\bf 143} (1996),  499--527.

\bibitem{Carlen2017}
E.~Carlen, {\em Duality and stability for functional inequalities}, Ann. Fac. Sci. Toulouse Math. (6) {\bf 26} (2017), 319--350.

\bibitem{CarlenLoss}
E.~Carlen and M.~Loss, {\em Competing symmetries, the logarithmic {HLS}   inequality and {O}nofri's inequality on {$S^n$}}, Geom. Funct. Anal. {\bf 2}  (1992), 90--104.

\bibitem{carrillo2019reverse}
J.~A.~Carrillo, M.~G.~Delgadino, J.~Dolbeault, R.~L.~Frank, and F.~Hoffmann. {\em Reverse Hardy–Littlewood–Sobolev inequalities}, J. Math. Pures Appl. {\bf 132} (2019), 133--165.

\bibitem{DouZhu15}
J.~Dou and M.~Zhu, {\em Reversed {H}ardy--{L}ittlewood--{S}obolev inequality}, Int. Math. Res. Not. IMRN (2015), 9696--9726.

\bibitem{FradeliziLiMadiman}
M.~Fradelizi, J.~Li, and M.~Madiman, {\em Concentration of information content
  for convex measures}, Electron. J. Probab. {\bf 25} (2020), Paper No. 20, 22.

\bibitem{FrankLieb2010}
R.~Frank and E.~Lieb, {\em Inversion positivity and the sharp {H}ardy--{L}ittlewood--{S}obolev inequality}, Calc. Var. Partial Differential Equations {\bf 39} (2010), 85--99.
  
\bibitem{Gardner} 
R.~Gardner, {\em Geometric {T}omography}, Second ed., Encyclopedia of  Mathematics and its Applications, vol.~58, Cambridge University Press,  Cambridge, 2006.

\bibitem{Gardner_Gorizia} 
R.~Gardner, {\em The Brunn-Minkowski inequality: A survey with proofs}, available at:  https://faculty.gardner.wwu.edu/gorizia12.pdf 
  
\bibitem{GZ} 
R.~Gardner and G.~Zhang, {\em Affine inequalities and radial mean bodies},  Amer. J. Math. {\bf 120} (1998), 505--528.

\bibitem{GelfandShilov}  
I.~M.~Gelfand and G.~E.~Shilov, {\em Generalized Functions, Vol.~1. Properties and Operators}, Academic Press, New York, 1964.

\bibitem{Haberl:Schuster1} 
C.~Haberl and F.E.~Schuster, {\em General ${L}_p$ affine isoperimetric inequalities}, J. Differential Geom. {\bf 83} (2009), 1--26.

\bibitem{Haberl:Schuster2} 
C.~Haberl and F.E.~Schuster, {\em Asymmetric affine $L_p$ Sobolev inequalities}, J. Funct. Anal. {\bf 257} (2009), 641--658.

\bibitem{HaddadLudwig_fracsob} 
J.~Haddad and M.~Ludwig, {\em Affine fractional Sobolev and isoperimetric inequalities}, J. Differential Geom., in press.

\bibitem{HaddadLudwig_Lpfracsob} 
J.~Haddad and M.~Ludwig, {\em Affine fractional $L^p$ Sobolev inequalities}, Math. Ann. {\bf 388} (2024),  1091--1115. 

\bibitem{KoldobskyPajorYaskin}
A.~Koldobsky, A.~Pajor, and V.~Yaskin, {\em Inequalities of the
  {K}ahane-{K}hinchin type and sections of {$L_p$}-balls}, Studia Math. {\bf
  184} (2008), 217--231.

\bibitem{Lieb83} 
E.~Lieb, {\em Sharp constants in the {H}ardy-{L}ittlewood-{S}obolev and related inequalities}, Ann. of Math. (2) {\bf 118} (1983), 349--374.
  
\bibitem{Lieb:Loss} 
E.~Lieb and M.~Loss, {\em Analysis}, second ed., Graduate Studies in Mathematics, vol.~14, American Mathematical Society, Providence, RI, 2001.  

\bibitem{Ludwig:fracperi} 
M.~Ludwig, {\em Anisotropic fractional perimeters}, J. Differential Geom. {\bf 96} (2014), 77--93.

\bibitem{Ludwig:fracnorm} 
M.~Ludwig, {\em Anisotropic fractional {S}obolev norms}, Adv. Math. {\bf  252} (2014), 150--157.

\bibitem{Lutwak75} 
E.~Lutwak, {\em Dual mixed volumes}, Pacific J. Math. {\bf 58} (1975), 531--538.

\bibitem{Lutwak88b} 
E.~Lutwak, {\em Inequalities for {H}adwiger's harmonic quermassintegrals}, Math. Ann. {\bf 280} (1988), 165--175.

\bibitem{LYZ2000} 
E.~Lutwak, D.~Yang, and G.~Zhang, {\em ${L}\sb p$ affine isoperimetric  inequalities}, J. Differential Geom. {\bf 56} (2000), 111--132.

\bibitem{LYZ2002b} 
E.~Lutwak, D.~Yang, and G.~Zhang, {\em Sharp affine ${L}_p$ {S}obolev   inequalities}, J. Differential Geom. {\bf 62} (2002), 17--38.

\bibitem{MOP1967} 
A.~Marshall, I.~Olkin, and F.~Proschan, {\em Monotonicity of ratios of means and other applications of majorization}, Inequalities ({P}roc. {S}ympos. {W}right-{P}atterson {A}ir {F}orce {B}ase, {O}hio, 1965), Academic Press, New York, 1967, 177--190.

\bibitem{MilmanYehudayoff} 
E.~Milman and A.~Yehudayoff, {\em Sharp isoperimetric inequalities for affine quermassintegrals}, J. Amer. Math. Soc. {\bf 36} (2023), 1061--1101.

\bibitem{Milman:Pajor} 
V.~D.~Milman and A.~Pajor, {\em Isotropic position and inertia ellipsoids and zonoids of the unit ball of a normed {$n$}-dimensional space}, Geometric aspects of functional analysis (1987--88), Lecture Notes in Math., vol. 1376, Springer, Berlin, 1989, 64--104.

\bibitem{NgoNguyen17} 
Q.~A.~Ng\^{o} and V.~H.~Nguyen,  {\em Sharp reversed {H}ardy--{L}ittlewood--{S}obolev inequality on {${\bf R}^n$}}, Israel J. Math. {\bf 220} (2017), 189--223.

\bibitem{Schneider:CB2} 
R.~Schneider, {\em Convex {B}odies: the {B}runn-{M}inkowski {T}heory}, {S}econd expanded ed., Encyclopedia of Mathe\-matics and its Applications, vol. 151, Cambridge University Press, Cambridge, 2014.
  
\bibitem{Tuo_Wang} 
T.~Wang, {\em The affine {S}obolev--{Z}hang inequality on {BV}$({\R}^n)$}, Adv.  Math. {\bf 230} (2012), 2457--2473.
  
\bibitem{Zhang91} 
G.~Zhang, {\em Restricted chord projection and affine inequalities}, Geom. Dedicata {\bf 39} (1991), 213--222.

\bibitem{Zhang99} 
G.~Zhang, {\em The affine {S}obolev inequality}, J. Differential Geom. {\bf 53} (1999), 183--202.

\end{thebibliography}
\end{document}